\documentclass[reqno, 12pt, reqno]{amsart}

\usepackage{amsfonts, amsthm, amsmath, amssymb}
\usepackage{hyperref}
\hypersetup{colorlinks=false}

\usepackage[margin=3.4cm]{geometry}
\usepackage{stmaryrd}
\usepackage{enumitem}
\usepackage{tikz-cd}
\usetikzlibrary{cd}

\RequirePackage{mathrsfs}
\let\mathcal\mathscr

\numberwithin{equation}{section}

\newtheorem{theorem}{Theorem}[section]
\newtheorem{lemma}[theorem]{Lemma}
\newtheorem{proposition}[theorem]{Proposition}

\theoremstyle{remark}
\newtheorem*{ack}{Acknowledgements}
\newtheorem{remark}[theorem]{Remark}

\newtheorem{definition}[theorem]{Definition}

\renewcommand{\phi}{\varphi}
\renewcommand{\rho}{\varrho}
\renewcommand{\epsilon}{\varepsilon}

\renewcommand{\leq}{\leqslant}
\renewcommand{\le}{\leqslant}
\renewcommand{\geq}{\geqslant}
\renewcommand{\ge}{\geqslant}

\newcommand{\PP}{\mathbb{P}}

\newcommand{\ZZ}{\mathbb{Z}}
\newcommand{\NN}{\mathbb{N}}
\newcommand{\QQ}{\mathbb{Q}}
\newcommand{\RR}{\mathbb{R}}

\newcommand{\cG}{\mathcal{G}}
\newcommand{\cS}{\mathcal{S}}
\newcommand{\cQ}{\mathcal{Q}}

\newcommand{\cP}{\mathcal{P}}
\newcommand{\cO}{\mathcal{O}}

\newcommand{\cL}{\mathcal{L}}

\newcommand{\cT}{\mathcal{T}}

\newcommand{\cHom}{\mathscr{H}\!om}

\newcommand{\cUL}{\mathcal{U\!L}}
\newcommand{\cUP}{\mathcal{U\!P}}
\newcommand{\cUG}{\mathcal{UG}}

\newcommand{\x}{\mathbf{x}}
\newcommand{\y}{\mathbf{y}}
\newcommand{\uu}{\mathbf{u}}
\newcommand{\vv}{\mathbf{v}}
\newcommand{\z}{\mathbf{z}}
\newcommand{\w}{\mathbf{w}}
\newcommand{\bb}{\mathbf{b}}

\newcommand{\al}{\alpha}
\newcommand{\ga}{\gamma}
\newcommand{\dl}{\delta}
\newcommand{\lm}{\lambda}
\newcommand{\ve}{\varepsilon}

\DeclareMathOperator{\rank}{rank}
\DeclareMathOperator{\Pic}{Pic}
\DeclareMathOperator{\Spec}{Spec}

\DeclareMathOperator{\covol}{covol}

\DeclareMathOperator{\Span}{span}
\DeclareMathOperator{\vol}{vol}
\DeclareMathOperator{\Hilb}{Hilb}
\DeclareMathOperator{\Hom}{Hom}
\DeclareMathOperator{\SL}{SL}
\DeclareMathOperator{\GL}{GL}
\DeclareMathOperator{\PGL}{PGL}
\DeclareMathOperator{\Mat}{Mat}
\DeclareMathOperator{\Gr}{Gr}
\newcommand{\gr}{\Gr}
\DeclareMathOperator{\id}{id}

\newcommand{\idmat}[1]{\operatorname{I}_{#1}}
\newcommand{\diag}[1]{\operatorname{diag} (#1)}
\newcommand{\indMat}[1]{\operatorname{Mat}_{#1}^{\times}}
\newcommand{\induniMat}[1]{\operatorname{Mat}_{#1}^{\times,1}}

\newcommand{\transpose}{\mathrm{t}}
\newcommand{\dual}[1]{{#1^{*}}}
\newcommand{\perpen}[1]{{#1^{\perp}}}
\newcommand{\factor}[1]{#1^{\pi}}
\newcommand{\factorg}[2]{{#1^{\pi,#2}}}
\newcommand{\uni}[1]{#1^{1}}

\newcommand{\norm}[1]{\left\lVert#1\right\rVert_2}

\newcommand*\diff{\mathop{}\!\mathrm{d}}

\begin{document}

\title{Equidistribution and freeness 
 on Grassmannians}

\author{Tim Browning}
\author{Tal Horesh}
\author{Florian Wilsch}

\address{IST Austria\\
Am Campus 1\\
3400 Klosterneuburg\\
Austria}
\email{tdb@ist.ac.at}
\email{tal.horesh@ist.ac.at}
\email{florian.wilsch@ist.ac.at}

\subjclass[2010]{14G05 (11N45, 11P21, 
	14M15, 20G20)}

\begin{abstract}
We associate a certain tensor product lattice to any primitive integer lattice and ask about its typical 
shape. These lattices are related to the tangent bundle of  Grassmannians and their study is motivated 
by  Peyre's  programme on ``freeness'' for rational points of bounded height on Fano varieties. 
\end{abstract} 

\date{\today}

\maketitle

\thispagestyle{empty}
\setcounter{tocdepth}{1}
\tableofcontents

\section{Introduction}

Understanding the density of rational points on smooth Fano varieties lies at the confluence of algebraic geometry, harmonic analysis and analytic number theory. The guiding conjecture is due to Manin~\cite{FMT89}, and its refinement by  Peyre~\cite{peyre-duke}. 
Given a smooth Fano variety $V$ defined over a number field $k$ such that 
$V(k)$ is Zariski dense in $V$, and given an anticanonical height function 
$H\colon V(k)\to \RR$, 
it is conjectured that there exists a thin set of rational points $Z\subset V(k)$ 
and an explicit
 constant $c_{V,H}>0$ 
such that 
\begin{equation}\label{eq:manin}
\#\{x\in V(k)\setminus Z: H(x)\leq B\} \sim c_{V,H} B(\log B)^{\rho(V)-1}, \quad \text{as } B \to \infty,
\end{equation}
where $\rho(V)=\rank \Pic(V)$.
(Here, as proposed by Serre~\cite[\S~3.1]{Ser08}, a subset $Z \subset V(k)$ is said to be \emph{thin} if it is
a finite union of subsets which are either contained in a proper closed subvariety of $V$, or contained 
in some $\pi(Y(k))$ where $\pi\colon Y \to V$ is a generically finite dominant morphism, with $\deg(\pi)>1$ and  $Y$ irreducible.)

Despite major progress by 
Lehmann, Sengupta and Tanimoto~\cite{geometry} on identifying problematic thin sets for Fano varieties, they can be hard to work with in general.  An alternative path based on a notion  of ``freeness'' has surfaced in recent work of 
Peyre~\cite{peyre-freedom}. It posits the idea that the distribution of ``sufficiently free'' rational points of bounded height on smooth Fano varieties should conform to the
asymptotic behaviour in~\eqref{eq:manin}, without the need to first identify appropriate thin sets
of rational points.  This  conjecture has been confirmed for smooth hypersurfaces over $\QQ$ of low degree by Browning and Sawin~\cite{sawin3}. 
However, at the same time it has also been shown by Sawin~\cite{sawin} that 
the proposal fails for the Hilbert scheme $\Hilb^2(\PP^n)$,  since the thin subset  of rational points consisting of points that lift to a certain double cover contains many points with relatively large freeness.
 (In fact, as discussed in \cite[\S~4]{peyre-a}, Peyre has supplemented his freeness proposal with an ``all the heights'' variant, which explains this example.)

The primary  aim of this paper is to study  the notion of freeness in the classical setting  of
Grassmannian varieties over $\QQ$. Not only  shall we be able to show that Peyre's freeness conjecture holds true for Grassmannians, but we shall even be able 
to prove a natural equidistribution statement for certain tangent lattices that emerge in the definition  of freeness. 

\medskip

For any integers $1\leq m\leq n-1$, 
there is a natural $\QQ$-scheme $\gr(m,n)$ whose $\QQ$-rational points $\gr(m,n)(\QQ)$ coincide with $m$-dimensional linear subspaces of $\QQ^n$. 
The Grassmannian is a smooth $\QQ$-variety of dimension $m(n-m)$, admitting a  
Pl\"ucker embedding 
\[
    \iota\colon \gr(m,n)\to \PP_\QQ^{\binom{m}{n}-1}.
\]  
The Picard group $\Pic (\gr(m,n))$ is isomorphic to $ \ZZ$ and it is generated by the divisor 
$\cL=\iota^* \cO(1)$. The anticanonical bundle is $\omega^{-1}=\cL^{\otimes n}$. This  confirms that $\gr(m,n)$ is Fano and allows one to define an
anticanonical height function  $H=H_{\omega^{-1}}$ on $\gr(m,n)(\QQ)$.
On setting  $\Lambda = V \cap \ZZ^n$ for a linear subspace $V$, 
a rational point $x\in \gr(m,n)(\QQ)$ is the same thing as a primitive  lattice $\Lambda\subset \ZZ^n$ of rank $m$. Then $H(x)=\covol(\Lambda)^n$, where  
$\covol(\Lambda)$ is the covolume of the lattice.
Schmidt~\cite{schmidt} has shown that
\begin{equation}\label{eq:schmidt}
\# \{ x\in \gr(m,n)(\QQ): H(x)\leq B \}
= c_{m,n} B +O\left(B^{ 1 - \max\left\{\frac{1}{mn}, \frac{1}{(n-m)n}\right\}} \right),
\end{equation}
 for any $1\leq m\leq n-1$, 
where 
\begin{equation}\label{eq:cmn}
c_{m,n}=\frac{1}{n}\binom{n}{m}\frac{V(n)V(n-1)\dots V(n-m+1)}{V(1)V(2)\dots V(m)}
\frac{\zeta(2)\dots \zeta(m)}{\zeta(n)\dots \zeta(n-m +1)}
\end{equation}
and 
$V(N)=\vol\{\x\in \RR^N: \norm{\x}\leq 1\}$
for any $N\in \NN$.
This agrees with the conjectured asymptotic formula~\eqref{eq:manin}, with $Z=\emptyset$.
Moreover, Peyre~\cite[\S~6]{peyre-duke} has shown 
that the  leading constant $c_{m,n}$ agrees with his prediction.
In fact, Franke, Manin and Tschinkel~\cite{FMT89} have a far-reaching generalisation of Schmidt's result to arbitrary Flag varieties over arbitrary number fields and Peyre has confirmed his conjectured constant for this more general class.

In~\cite[D\'ef.~4.11]{peyre-freedom},
Peyre defines a freeness function $\ell(x)$ on  points $x\in V(\QQ)$,
for any smooth Fano $\QQ$-variety. 
This function takes values in $[0,1]$ and its precise definition will be  recalled in~\eqref{eq:mu-free}.
When $V=\gr(m,n)$ is the Grassmannian it admits a particularly succinct description. 
Associated to $x\in \gr(m,n)(\QQ)$ is the tensor product lattice
\[
    T_\Lambda= \Lambda^*\otimes_
    \ZZ \Lambda^\pi ,
\]
where $\Lambda\subset \ZZ^n$ is the primitive rank $m$ lattice  associated to $x$, 
with $\Lambda^*$ being the \emph{dual lattice} and 
$\Lambda^\pi=(\Lambda^\perp)^*$  the \emph{factor lattice}. (The precise definitions of these will be recalled in Section~\ref{s:gn}.) The lattice $T_\Lambda$
has rank $m(n-m)$ and will henceforth be referred to as the \emph{tangent lattice}.
The freeness function $\ell(x)$ measures the extent to which $T_\Lambda$ is lopsided, with a small value of $\ell(x)$ corresponding to the existence of an unusually large largest successive minimum.

Suppose for the moment that $m=1$, so that $\gr(1,n)=\PP_\QQ^{n-1}$. 
In this case it  follows from Peyre~\cite[Prop.~7.1]{peyre-freedom} that 
\begin{equation}\label{eq:Pn-1}
    \ell(x)\geq \frac{n-1}{n}
\end{equation}
for rational points  $x\in \PP_\QQ^{n-1}(\QQ)$.   
The  same bound  applies to the case $m=n-1$, by duality.
For the remaining values of $m$, it turns out that the freeness function can become arbitrarily small as one goes over rational points on the Grassmannian $\gr(m,n)$. In fact we have the following result. 
\begin{theorem}\label{t:unfree}
    Let $m,n$ be integers such that 
    $1< m<n-1$. Then there exist infinitely many $x\in \gr(m,n)(\QQ)$ such that $\ell(x)=0$.
\end{theorem}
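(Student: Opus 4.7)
The plan is to exhibit explicit primitive lattices $\Lambda_N \subset \ZZ^n$ of rank $m$, one for each $N \geq 1$, whose tangent lattice $T_{\Lambda_N}$ contains a proper saturated sublattice of covolume exactly $1$, while $\covol(T_{\Lambda_N})$ itself is strictly less than $1$. In the log-slope normalisation underlying \eqref{eq:mu-free}, the witness sublattice will then have slope $0$ against a total lattice of strictly positive slope, which should force $\ell(x_N) = 0$.

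Concretely, I would take
$$\Lambda_N = \Span_\ZZ(e_1, \ldots, e_{m-1}, e_m + N e_n) \subset \ZZ^n.$$
These are primitive (a quick divisibility check) and pairwise distinct as $N$ varies, so the associated points $x_N \in \gr(m,n)(\QQ)$ are infinitely many. The hypothesis $1 < m < n-1$ enters twice: it provides the unit vectors $e_1, \ldots, e_{m-1}$ inside $\Lambda_N$, and the unit vectors $e_{m+1}, \ldots, e_{n-1}$ inside $\Lambda_N^\perp = \Span_\ZZ(e_{m+1}, \ldots, e_{n-1}, N e_m - e_n)$. Passing to the dual and factor lattices $\Lambda_N^*$ and $\Lambda_N^\pi$ (computed via the inner product inherited from $\RR^n$), these unit vectors survive as part of a $\ZZ$-basis, so tensor products of basis elements form a $\ZZ$-basis of $T_{\Lambda_N} = \Lambda_N^* \otimes \Lambda_N^\pi$. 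In particular,
$$W_N := \Span_\ZZ\{e_i \otimes e_j : 1 \leq i \leq m-1,\ m+1 \leq j \leq n-1\}$$
is a saturated sublattice of $T_{\Lambda_N}$ of rank $(m-1)(n-m-1) \geq 1$, spanned by an orthonormal family, and hence of covolume exactly $1$.

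Meanwhile, the tensor-product formula combined with $\covol(\Lambda_N) = \covol(\Lambda_N^\perp) = \sqrt{1+N^2}$ yields $\covol(T_{\Lambda_N}) = (1+N^2)^{-n/2} < 1$ for every $N \geq 1$. Thus the log-slope $-\log\covol/\rank$ of $T_{\Lambda_N}$ is strictly positive while that of $W_N$ vanishes identically, and substituting into \eqref{eq:mu-free} the witness $W_N$ should force $\ell(x_N) = 0$ literally.

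The main obstacle is purely notational. I would need to unpack \eqref{eq:mu-free} carefully and confirm that Peyre's freeness indeed reduces, for the tangent lattice of a Grassmannian, to the smallest (normalised) log-slope of a non-trivial proper saturated sublattice of $T_\Lambda$ divided by the log-slope of the ambient lattice, with the normalisation aligned with the anticanonical height. Once this bookkeeping is settled, the explicit witness $W_N$ gives $\ell(x_N) = 0$ for every $N \geq 1$, and no estimation or asymptotics are needed.
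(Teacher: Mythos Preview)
Your lattice $\Lambda_N$ is, up to relabelling coordinates, the same construction the paper uses, and the unit tensors $e_i\otimes e_j$ are the right witnesses. The gap is in the last paragraph, where you hope that \eqref{eq:mu-free} reduces to the \emph{smallest slope of a sublattice} of $T_\Lambda$. It does not: by definition $\mu_{\min}(T_\Lambda)$ is the minimum slope of a \emph{quotient} $T_\Lambda/M$ over proper saturated $M$. Your $W_N\subset T_{\Lambda_N}$ has slope $0$, but the corresponding quotient $T_{\Lambda_N}/W_N$ has covolume $\covol(T_{\Lambda_N})/\covol(W_N)=(1+N^2)^{-n/2}<1$, hence strictly positive slope; so $W_N$ as a sublattice of $T_{\Lambda_N}$ does not witness $\mu_{\min}\le 0$.

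The repair is immediate and is what the paper does. The very same standard basis vectors $e_1,\dots,e_{m-1}$ lie in $\Lambda_N$ (not just in $\Lambda_N^*$) and $e_{m+1},\dots,e_{n-1}$ lie in $\Lambda_N^\perp$ (not just in $\Lambda_N^\pi$). Hence a single unit tensor, say $e_1\otimes e_{m+1}$, lies in $\Lambda_N\otimes_\ZZ\Lambda_N^\perp=T_{\Lambda_N}^*$ and spans a rank-one sublattice of slope $0$. This gives $\mu_{\max}(T_{\Lambda_N}^*)\ge 0$, and then slope duality \eqref{eq:slope-duality} yields $\mu_{\min}(T_{\Lambda_N})=-\mu_{\max}(T_{\Lambda_N}^*)\le 0$, whence $\ell(x_N)=0$. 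Equivalently, you can stay inside $T_{\Lambda_N}$ but take the orthogonal complement $M''$ of $W_N$ in the tensor basis; then $T_{\Lambda_N}/M''$ is isometric to $W_N$ and has slope $0$. Either way, once the ``sublattice versus quotient'' issue is fixed, your construction and the paper's coincide.
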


Let $1\leq k<m$. 
Suppose we fix a $k$-dimensional linear subspace $L$ of $\QQ^n$.
Then among the linear subspaces parameterised by  $\gr(m,n)(\QQ)$ is the set $V_L$ of those that contain $L$.
Any point in $V_L$ whose height is large compared to that of $L$ will have low freeness. The example  constructed to prove Theorem~\ref{t:unfree} is of exactly this sort.
Although all points of low freeness have this kind of special structure, they do not form a thin subset.
\begin{theorem}\label{t:unfree-not-thin}
    Let $m,n$ be integers such that $1<m<n-1$.
    Then, for all $\ve>0$, the set
    \[
        \Omega_\ve = \{x\in \gr(m,n)(\QQ) : \ell(x)<\ve\}  
    \]
    of non-$\ve$-free points is not thin.
\end{theorem}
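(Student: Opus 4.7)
The plan is to argue by contradiction using Hilbert's irreducibility theorem (HIT) applied to the sub-Grassmannians $V_L = \{\Lambda \in \gr(m,n) : \Lambda \supset L\}$ that feature in the construction behind Theorem~\ref{t:unfree}. Suppose $\Omega_\varepsilon \subset Z := W \cup \bigcup_{j=1}^M \pi_j(Y_j(\QQ))$ for a proper closed subvariety $W \subsetneq \gr(m,n)$ and generically finite dominant morphisms $\pi_j\colon Y_j \to \gr(m,n)$ with $Y_j$ irreducible and $\deg \pi_j > 1$. Fix an integer $k$ with $1 \leq k < m$; for each primitive rank-$k$ lattice $L \subset \ZZ^n$, $V_L \cong \gr(m-k,n-k)$ via $\Lambda \mapsto \Lambda/L$, and $\bigcup_L V_L = \gr(m,n)$ set-theoretically.

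For Zariski-generic $L \in \gr(k,n)$, first of all $V_L \not\subset W$ (otherwise $W \supset \bigcup_L V_L = \gr(m,n)$); and secondly each base change $\pi_{j,L}\colon Y_{j,L} := \pi_j^{-1}(V_L) \to V_L$ is still a generically finite cover of degree $>1$, with $Y_{j,L}$ irreducible. Indeed, if some $\pi_{j,L}$ were birational for generic $L$, the resulting rational sections would patch on the incidence variety $I = \{(L,\Lambda) : L \subset \Lambda\}$: restricted to a fiber over generic $\Lambda$ (isomorphic to $\gr(k,m)$), the lift is a morphism from this irreducible projective variety to the finite set $\pi_j^{-1}(\Lambda)$, hence constant. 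Thus the lift depends only on $\Lambda$, yielding a rational section of $\pi_j$ over $\gr(m,n)$ and contradicting $\deg \pi_j > 1$ with $Y_j$ irreducible.

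Choose such an $L$ with $H(L)$ small. Then $Z \cap V_L(\QQ) \subset (W\cap V_L)(\QQ) \cup \bigcup_j \pi_{j,L}(Y_{j,L}(\QQ))$ is a thin subset of $V_L(\QQ)$. Since $V_L \cong \gr(m-k,n-k)$ is a rational $\QQ$-variety, HIT implies that $V_L(\QQ) \setminus Z$ is infinite. On the other hand, the discussion preceding Theorem~\ref{t:unfree} guarantees that for this fixed $L$, only finitely many $\Lambda \in V_L(\QQ)$ satisfy $\ell(\Lambda) \geq \varepsilon$: indeed, such $\Lambda$ must have $H(\Lambda)$ bounded in terms of $H(L)$ and $\varepsilon$, and this bounded-height set in $V_L$ is finite. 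Removing this finite exceptional set from the infinite $V_L(\QQ) \setminus Z$ yields a $\Lambda \in V_L(\QQ)$ with $\ell(\Lambda) < \varepsilon$ and $\Lambda \notin Z$, contradicting $\Omega_\varepsilon \subset Z$.

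The principal obstacle is the geometric step of verifying that each cover $\pi_j$ restricts to a non-trivial cover of $V_L$ for generic $L$. The sketch above reduces this to the (standard) fact that a dominant generically finite morphism of degree $>1$ from an irreducible variety admits no rational section; however, promoting the argument to a rigorous Bertini-type statement for the family $\{V_L\}_{L \in \gr(k,n)}$, including irreducibility of the generic $Y_{j,L}$, requires care. A secondary point is to confirm that the construction underlying Theorem~\ref{t:unfree} is quantitative enough to provide the required finiteness of $\{\Lambda \in V_L(\QQ) : \ell(\Lambda) \geq \varepsilon\}$ for each $L$, which should follow directly from the explicit bounds that construction produces.
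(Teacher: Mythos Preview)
Your approach has a genuine gap in the finiteness claim. You assert that for a fixed rank-$k$ lattice $L$, only finitely many $\Lambda \in V_L(\QQ)$ satisfy $\ell(\Lambda) \ge \ve$, appealing to the informal remark before Theorem~\ref{t:unfree}. But fixing $L \subset \Lambda$ only bounds $s_1(\Lambda)$; it gives no control on $s_1(\Lambda^\perp)$. By~\eqref{eq:small-s1}, the condition $\ell(\Lambda) < \ve$ is essentially $s_1(\Lambda)s_1(\Lambda^\perp) \le C\,\covol(\Lambda)^{\ve n/(m(n-m))}$, and for \emph{generic} $\Lambda \in V_L$ the orthogonal lattice $\Lambda^\perp$ is well balanced with $s_1(\Lambda^\perp) \asymp \covol(\Lambda)^{1/(n-m)}$. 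Plugging in, one finds $\ell(\Lambda) \to m/n$ as $H(\Lambda) \to \infty$ along such $\Lambda$. Concretely, for $n=4$, $m=2$, $L=\ZZ e_1$, take $\Lambda = \ZZ e_1 \oplus \ZZ(0,a,b,c)$ with $(a,b,c)$ primitive and chosen so that $(a,b,c)^\perp \cap \ZZ^3$ is balanced; then $\ell(\Lambda) \to \tfrac12$, so for any $\ve < \tfrac12$ there are infinitely many $\Lambda \in V_L(\QQ)$ with $\ell(\Lambda) \ge \ve$. The paper's informal sentence ``low freeness'' only means $\ell(\Lambda) \lesssim m/n$, not $\ell(\Lambda) < \ve$ for arbitrary $\ve$.

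The paper repairs exactly this point: it works not with $V_L$ but with $X = \{\Lambda : l \subset \Lambda_\RR \subset H\}$ for a fixed rational line $l$ \emph{and} a fixed rational hyperplane $H$, thereby bounding \emph{both} $s_1(\Lambda)$ and $s_1(\Lambda^\perp)$; then every $\Lambda \in X(\QQ)$ of large enough height lies in $\Omega_\ve$. From there the paper takes a different route altogether: rather than HIT plus a Bertini argument for the covers $\pi_j$, it varies $l$ and $H$ and uses weak approximation on $X \cong \Gr(m-1,n-2)$ to show that $\Omega_\ve$ is dense in $\prod_v \Gr(m,n)(\QQ_v)$, and concludes by Serre's result that a thin set has nowhere-dense adelic image. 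Your HIT strategy could in principle be salvaged by replacing $V_L$ with such an $X$, but you would still need the delicate irreducibility statement for the restricted covers that you flag as an obstacle; the paper's weak-approximation argument sidesteps this entirely.
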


For a suitable range of $\ve>0$, Peyre suggests focusing on  the restricted counting function which only counts
\emph{$\ve$-free points}, meaning those points $x\in V(\QQ)$ for which  $\ell(x)\geq \ve$.
For suitable Fano varieties it is expected that such a restriction captures the behaviour articulated in the Manin--Peyre prediction. The following result confirms this for Grassmannians.

\begin{theorem}\label{t:free}
For any integers
$1\leq m\leq n-1$ and any $0\leq \ve<1$, we have 
\[
    \#\{x\in \gr(m,n)(\QQ):H(x)\leq B, ~\ell(x)\geq \ve\}=c_{m,n} B +O\left(B^{1-\frac{1-\ve}{n}}  (\log B)^{2m-2}\right),
\]
as $B\to \infty$, 
where $c_{m,n}$ is given by~\eqref{eq:cmn}.
\end{theorem}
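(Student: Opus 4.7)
My plan is to deduce the theorem from Schmidt's count~\eqref{eq:schmidt} by bounding the contribution of rational points with small freeness. It suffices to prove the upper bound
\[
\#\{x \in \gr(m,n)(\QQ) : H(x) \leq B,\ \ell(x) < \ve\} \ll B^{1-\frac{1-\ve}{n}}(\log B)^{2m-2},
\]
since subtracting this from~\eqref{eq:schmidt}, whose error is absorbed into the error above, yields the theorem. The proof then naturally splits into two parts: a structural lemma characterising lattices with small freeness, and a counting argument that applies Schmidt's formula to the structured family thus identified.

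The structural lemma would assert that, if $\Lambda \subset \ZZ^n$ is a primitive rank-$m$ lattice with $\ell(\Lambda) < \ve$, then there exist an integer $k$ with $1 \leq k \leq m-1$ and a primitive lattice $L \subset \ZZ^n$ that is either a sublattice of $\Lambda$ of rank $k$ or an overlattice of $\Lambda$ of rank $m+k$, satisfying
\[
\covol(L) \ll \covol(\Lambda)^{1 - (1-\ve)/k},
\]
with implicit constants depending only on $m$ and $n$. This is expected because $\ell(\Lambda)$ is governed by the shape of the tangent lattice $T_\Lambda = \Lambda^* \otimes_\ZZ \Lambda^\pi$, and a lopsided sublattice of a tensor product $A \otimes B$ must, up to bounded multiplicative factors in covolume, descend from a sublattice of $A$ or of $B$. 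By duality, sublattices of $\Lambda^*$ correspond to primitive sublattices of $\Lambda$, whereas sublattices of $\Lambda^\pi$ correspond to primitive lattices sandwiched between $\Lambda$ and $\ZZ^n$, accounting for the two alternatives; the second case is reduced to the first by passing to $\Lambda^\perp$.

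The counting then proceeds by fixing $k$ and summing over the flag member $L$. For each primitive rank-$k$ lattice $L \subset \ZZ^n$ of covolume $Y$, the primitive rank-$m$ lattices $\Lambda \supset L$ with $\covol(\Lambda) \leq X = B^{1/n}$ correspond bijectively to primitive rank-$(m-k)$ lattices in $\ZZ^n/L$ of covolume at most $X/Y$; by~\eqref{eq:schmidt} applied to $\gr(m-k, n-k)$, their number is $O((X/Y)^{n-k})$. A dyadic decomposition of $\covol(L) \in [T, 2T]$, using~\eqref{eq:schmidt} for $\gr(k, n)$ to bound the number of such $L$ by $O(T^n)$ and summing up to $T = Y_0 \ll X^{1 - (1-\ve)/k}$ from the structural lemma, yields a total of $O(X^{n-k} Y_0^k) = O(X^{n - (1 - \ve)}) = O(B^{1 - (1-\ve)/n})$ per $k$, with the logarithmic factor $(\log B)^{2m-2}$ coming from the dyadic summations in each of the two tensor factors together with the finite sum over $k$. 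The main obstacle is the quantitative structural lemma: pinning down the precise dependence of $\covol(L)$ on $\ve$ requires a careful analysis of the successive minima of the tensor product $\Lambda^* \otimes \Lambda^\pi$, most likely via an exterior-algebra argument in the spirit of the Pl\"ucker embedding, relating these minima to covolumes of wedges of lattice vectors drawn from $\Lambda$ and $\Lambda^\perp$. Once this dictionary is established, the rest of the argument is a routine application of Schmidt's count with standard dyadic decompositions.
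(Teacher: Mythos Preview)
Your overall strategy---deduce the theorem from Schmidt's count by bounding $E_\ve(B)=\#\{x:\ell(x)<\ve,\ H(x)\le B\}$---matches the paper exactly. The divergence is in how you bound $E_\ve(B)$, and there your argument contains a genuine gap.

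First, the counting step is wrong. Fix a primitive rank-$k$ lattice $L$ of covolume $Y$. Primitive rank-$m$ lattices $\Lambda\supset L$ with $\covol(\Lambda)\le X$ correspond (via $\Lambda\mapsto\Lambda^\perp$) to primitive rank-$(n-m)$ sublattices of the \emph{integral} lattice $L^\perp$, which has rank $n-k$ and covolume $Y$. Schmidt's method gives a count of order $X^{n-k}/Y^{n-m}$, not $(X/Y)^{n-k}$; your formula misses a factor $Y^{m-k}$. (The quotient $\ZZ^n/L\cong L^\pi$ has covolume $1/Y$, not $1$, so you cannot invoke~\eqref{eq:schmidt} for $\gr(m-k,n-k)$ directly.) With the corrected count, the dyadic sum over $\covol(L)\le Y_0=X^{1-(1-\ve)/k}$ becomes
\[
\sum_{T\le Y_0} T^n\cdot \frac{X^{n-k}}{T^{n-m}} \asymp X^{n-k}Y_0^{m}=X^{\,n-k+m-m(1-\ve)/k},
\]
and one checks that the exponent exceeds $n-1+\ve$ whenever $1\le k\le m-1$ and $m>1$. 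So even granting your structural lemma with the stated exponent, the bound falls short.

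Second, you never prove the structural lemma, and the heuristic you offer (``a lopsided sublattice of $A\otimes B$ descends to $A$ or $B$'') only yields, via Chen's inequality, that $\mu_{\max}(\Lambda)+\mu_{\max}(\Lambda^\perp)$ is not too negative---a constraint on a \emph{sum}, not a disjunction with the sharp exponents you need.

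The paper's proof exploits precisely this additive structure: from~\eqref{eq:small-s1} one has $s_1(\Lambda)s_1(\Lambda^\perp)\ll\covol(\Lambda)^{\ve n/(m(n-m))}$. After a dyadic decomposition in $\covol(\Lambda)\approx R$, $s_1(\Lambda)\approx S_1$, $s_1(\Lambda^\perp)\approx S_2$, the paper bounds the count by $\min\{\#P(m,n;R,S_1),\#P(n-m,n;R,S_2)\}$ and uses $\min\{\alpha,\beta\}\le\alpha^{(n-m)/n}\beta^{m/n}$ to interpolate. This weighted geometric mean is what converts the product constraint on $(S_1,S_2)$ into the exponent $n-1+\ve$; a purely disjunctive approach of the kind you propose cannot recover it.
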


We note that Theorem~\ref{t:free} is a direct consequence of~\eqref{eq:schmidt} when
$m=1$ or $m=n-1$, at least if $\ve\leq \frac{n}{n-1}$. We shall give two distinct proofs of the 
asymptotic formula
$\#\{x\in \gr(m,n)(\QQ):H(x)\leq B, ~\ell(x)\geq \ve\}\sim c_{m,n} B$. The first is based on adapting the geometry of numbers arguments developed by Schmidt~\cite{schmidt} and leads to the power saving   error term present in Theorem~\ref{t:free}. 
This is the object of 
Section~\ref{s:free-pf}. 
The second proof occurs in Section~\ref{ssec:equi-and-freeness}, but it doesn't give a power saving.
We have decided to give the second proof since it  is  based on a general  equidistribution statement of independent interest, 
which is  discussed in 
Section~\ref{s:equidistribution} and which gives access to further results. 
Indeed, using  this equidistribution statement, we can also address a question raised by Peyre~\cite[\S~9]{peyre-freedom} concerning an alternative counting function, in which one orders rational points by the \emph{maximal slope} $\mu_{\max}(T_\Lambda)$ of their tangent lattice, as defined 
in Section~\ref{s:gn}, instead of by height. In the same spirit as counting sufficiently free points, this approach excludes most points whose tangent lattice is very lopsided, since  such points
lead to tangent lattices whose maximal slope is large compared to the height. We shall prove the following result in Section~\ref{ssec:equi-and-freeness}.
\begin{theorem}\label{t:max-slope-intro}
For any integers
$1\leq m\leq n-1$, we have 
    \[
        \# \{\Lambda \in \Gr(m,n)(\QQ) : \mu_{\max}(T_\Lambda) \le \log B\} = c_{m,n}^\prime B^{m(n-m)} + o\left(B^{m(n-m)}\right),
    \]
    where $0<c_{m,n}^\prime < c_{m,n}$ is given  in~\eqref{eq:c'_mn}.
\end{theorem}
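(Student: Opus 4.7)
The plan is to combine the equidistribution statement for tangent lattices from Section~\ref{s:equidistribution} with a partial-summation manipulation in order to convert the maximal-slope ordering into an averaged height ordering. First, I would relate $\mu_{\max}(T_\Lambda)$ to $H(\Lambda)$ and the shape of $T_\Lambda$. Since $\Lambda$ is primitive, $\covol(\Lambda^\perp)=\covol(\Lambda)$, giving $\covol(\Lambda^*)=\covol(\Lambda^\pi)=\covol(\Lambda)^{-1}$; the tensor-product formula then yields $\covol(T_\Lambda)=\covol(\Lambda)^{-(n-m)-m}=\covol(\Lambda)^{-n}=H(\Lambda)^{-1}$. The mean slope of $T_\Lambda$ is thus $\mu(T_\Lambda)=\log H(\Lambda)/(m(n-m))$. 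Letting $\tilde T_\Lambda = H(\Lambda)^{1/(m(n-m))}\, T_\Lambda$ denote the unimodular rescaling and using that rescaling translates the slope by a constant, the condition $\mu_{\max}(T_\Lambda)\le\log B$ is equivalent to
\[
H(\Lambda)\le B^{m(n-m)} \quad\text{and}\quad \mu_{\max}(\tilde T_\Lambda)\le \log B - \frac{\log H(\Lambda)}{m(n-m)}.
\]

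Second, I would invoke the equidistribution result of Section~\ref{s:equidistribution}, which asserts that as $\Lambda$ ranges over primitive rank-$m$ sublattices of $\ZZ^n$ of bounded height, the shape $\tilde T_\Lambda$ equidistributes in its natural shape space $\mathcal{X}$ against a probability measure $\nu$. Setting $f(t)=\nu(\{L\in\mathcal{X}:\mu_{\max}(L)\le t\})$, the equidistribution together with Schmidt's asymptotic~\eqref{eq:schmidt} yields, for each fixed $t\ge 0$,
\[
\#\{\Lambda:H(\Lambda)\le T,\ \mu_{\max}(\tilde T_\Lambda)\le t\} \;\sim\; c_{m,n}\,f(t)\,T.
\]
Converting the theorem's condition into a Stieltjes integral against this joint count and making the substitution $H = B^{m(n-m)}e^{-m(n-m)s}$ produces
\[
\#\{\Lambda : \mu_{\max}(T_\Lambda)\le\log B\} \;\sim\; c_{m,n}\,m(n-m)\,B^{m(n-m)}\int_0^\infty e^{-m(n-m)s}f(s)\,ds,
\]
so that $c_{m,n}^\prime = c_{m,n}\cdot m(n-m)\int_0^\infty e^{-m(n-m)s}f(s)\,ds$; since $0 \le f\le 1$ with $f<1$ on a set of positive measure (tangent-lattice shapes of rank $m(n-m)>1$ admit unbalanced sublattices on such a set) and $m(n-m)\int_0^\infty e^{-m(n-m)s}\,ds=1$, we deduce $0<c_{m,n}^\prime<c_{m,n}$, in agreement with the form of~\eqref{eq:c'_mn}.

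The main obstacle is making the integration step rigorous. One needs the equidistribution of Section~\ref{s:equidistribution} to be uniform enough in the height $H$ to permit averaging over the full range $1\le H\le B^{m(n-m)}$, and the measure of the cuspidal region $\{L\in\mathcal{X}:\mu_{\max}(L)>s\}$ must decay sufficiently fast in $s$ for the integral to converge and to control the contribution of low-height $\Lambda$ with highly unbalanced tangent lattices. The required decay rate is of reduction-theoretic nature and is standard for the full space of unimodular lattices; an explicit description of $\mathcal{X}$ and $\nu$ as a homogeneous measure on a quotient reflecting the tensor-product constraint would then allow one to identify the constant with the explicit form~\eqref{eq:c'_mn}, while the soft equidistribution only produces the $o(B^{m(n-m)})$ error term stated in the theorem rather than a power saving.
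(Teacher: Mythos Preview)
Your approach is essentially that of the paper: both rewrite $\mu_{\max}(T_\Lambda)\le\log B$ as $H(\Lambda)\le e^{-m(n-m)\mu_{\max}(u(T_\Lambda))}B^{m(n-m)}$ and then feed this into the equidistribution of $u(T_\Lambda)$, and your integral for $c'_{m,n}$ agrees with~\eqref{eq:c'_mn} after an integration by parts. The obstacle you flag is handled in the paper not via explicit cusp decay but by packaging the partial summation into Theorem~\ref{t:equi''}~\ref{enum:equi-counting} and invoking Prokhorov's theorem: weak convergence of the $\nu_B$ to the probability measure $\nu$ already yields tightness, so truncating the bounded continuous function $f=e^{-m(n-m)\mu_{\max}}$ outside progressively large compacta costs an arbitrarily small error, and no quantitative reduction-theoretic estimate is needed.
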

In particular, this confirms the expectation expressed in~\cite[Rem.~7.8]{peyre-freedom} for projective space.
This alternative counting function behaves quite differently from the standard counting function with respect to an anticanonical height function. In particular, it is compatible with products by ~\cite[Rem.~7.21(b)]{peyre-freedom}. Thus it follows from 
Theorem~\ref{t:max-slope-intro} that
\[
    \# \{x \in X(\QQ) : \mu_{\max}(T_x) \le \log B\} = c_{X} B^{\dim X} + o\left(B^{\dim X}\right),
\]
for a suitable constant $c_X>0$, when $X$ is a product  of Grassmannians. A striking feature of this example is that the rank of $\Pic(X)$, which appears in~\eqref{eq:manin}, can become arbitrarily large.

\medskip

Returning to his asymptotic formula~\eqref{eq:schmidt}, Schmidt~\cite{schmidt1,schmidt2} also  proved various counting statements about lattices drawn from the set
\begin{equation}\label{eq:Lmn}
    \cL_{m,n} =
    \{\text{$\Lambda\subset \RR^n$ is a lattice of rank $m$}\}.
\end{equation}
This  is a homogeneous space for the group $\SL_n(\RR)$
and carries an $\SL_n(\RR)$-invariant measure that is unique up to multiplication by a positive scalar and induced by a Riemannian metric.
The space $\cL_{m,n}$ has infinite volume with respect to this measure, but rescaling induces a natural projection $\cL_{m,n}\to \cUL_{m,n}$ to the subspace
\[
    \cUL_{m,n}= \{\Lambda\in \cL_{m,n}: \covol(\Lambda)=1\}
\]
of \emph{unimodular lattices}.
Restricting the Riemannian metric on $\cL_{m,n}$ to $\cUL_{m,n}$ endows the latter with a finite measure, which we normalise to a probability measure.
Schmidt focuses on proving  
equidistribution statements about the projections of the lattices in $\cL_{m,n}$ to the space of 
similarity classes of rank $m$ lattices. However, as 
worked out by 
Horesh and 
Karasik~\cite[Thm.~3.2~(3)]{HK1}, 
it is also possible to show that primitive lattices in 
$\cL_{m,n}$  equidistribute in 
$\cUL_{m,n}$,
as $\covol(\Lambda)\to \infty$.
Theorem~\ref{t:free} tells us that among the rational points $x\in \gr(m,n)(\QQ)$,
it is rare to find points whose associated tangent lattice $T_\Lambda=\Lambda^*\otimes_\ZZ \Lambda^\pi$ is very lopsided. 
Our final result is an equidistribution statement about  the set of tangent lattices
$\{T_\Lambda : \Lambda\in \Gr(m,n)\}$,
as  $H(\Lambda)\to \infty$.
Such a result fits into the landscape of recent work by Aka, Einsiedler and Shapira \cite{AES1,AES2},  which is concerned with the equidistribution of the lattices $\mathbf{v}^\perp$, as one runs over  primitive integer solutions $\mathbf{v}\in \ZZ_{\text{prim}}^n$ to the equation $v_1^2+\cdots+v_n^2=N$, as $N$ runs to  infinity over suitable sets of positive integers. Indeed, although we shall not do so here, the lattices 
$\mathbf{v}^\perp$ can be interpreted as tangent lattices associated to points on the affine quadric. 

For $\Lambda \in \gr(m,n)$,  the tangent lattices $T_\Lambda$ are
lattices of rank $m(n-m)$ in $\RR^n\otimes \RR^n$. They 
belong to the 
submanifold
\begin{equation*}
    \cG_{m,n}
    = \{\Lambda \otimes \Lambda^\perp : \Lambda \in \cL_{m,n} \}
    \subset \cL_{m(n-m),n^2},
\end{equation*}
a precise description of which is given in Section~\ref{s:equidistribution}.
We shall construct a natural 
probability measure on 
    $\cUG_{m,n} = \cG_{m,n} \cap \cUL_{m(n-m),n^2}$,
which we denote by $\nu$, paving the way to a proof of the following equidistribution result. 

\begin{theorem}\label{t:equi}
With respect to $\nu$,
the projections of the set
\[
  \{T_\Lambda : \Lambda \in \Gr(m,n)(\QQ)\}  
\]
to $\cUL_{m(n-m),n^2}$ 
equidistribute in the manifold $\cUG_{m,n}$, as $H(\Lambda)\to \infty$, with rate of convergence  
$O (H(\Lambda)^{-\frac{1}{16n^4}})$. 
\end{theorem}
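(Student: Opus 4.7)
The strategy is to recognise the unimodularised tangent lattice $\overline{T_\Lambda}$ as the image of $\Lambda$ under an explicit $\SL_n(\RR)$-equivariant polynomial map, so that Theorem~\ref{t:equi} reduces to an effective equidistribution statement on the homogeneous space $\SL_n(\RR)/\SL_n(\ZZ)$. Concretely, fix the reference primitive lattice $\Lambda_0 = \ZZ^m \times \{0\}^{n-m}$. For $\gamma \in \SL_n(\ZZ)$, the identities $(\gamma\Lambda_0)^* = (\gamma^T)^{-1}\Lambda_0^*$ and $(\gamma\Lambda_0)^\perp = (\gamma^T)^{-1}\Lambda_0^\perp$ (both using $\gamma \ZZ^n = \ZZ^n$) imply
\[
    T_{\gamma \Lambda_0} = \big((\gamma^{T})^{-1} \otimes \gamma\big) \, T_{\Lambda_0}.
\]
Hence $\cUG_{m,n}$ is the orbit of $\overline{T_{\Lambda_0}}$ under the representation $\rho \colon g \mapsto (g^T)^{-1} \otimes g$ of $\SL_n(\RR)$ on $\RR^n \otimes \RR^n$, and $\nu$ is naturally characterised as the unique $\rho(\SL_n(\RR))$-invariant probability measure on this orbit.

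Under this reformulation, primitive rank-$m$ sublattices of $\ZZ^n$ are parameterised by the coset space $\SL_n(\ZZ)/P(\ZZ)$, where $P \subset \SL_n$ is the parabolic stabilising the span of $\Lambda_0$. In an Iwasawa decomposition $\gamma = kan$, the covolume $\covol(\gamma\Lambda_0) = H(\Lambda)^{1/n}$ is read off the $A$-component, so that the truncation $H(\Lambda) \le B$ becomes a bound on the diagonal entries of $a$. Qualitatively, the equidistribution of primitive lattices in $\cUL_{m,n}$ established in~\cite[Thm.~3.2(3)]{HK1} is equivalent to the equidistribution of the cosets $\gamma P(\ZZ)$ as $H(\Lambda) \to \infty$; pushing this forward through the equivariant polynomial map $\rho$ immediately yields the qualitative equidistribution of $\overline{T_\Lambda}$ in $\cUG_{m,n}$ with respect to $\nu$.

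To produce the explicit rate $O(H(\Lambda)^{-1/(16n^4)})$, I would invoke an effective equidistribution theorem on $\SL_n(\RR)/\SL_n(\ZZ)$ coming from spectral-gap bounds for $L^2(\SL_n(\RR)/\SL_n(\ZZ))$, in the Kleinbock--Margulis or Eskin--McMullen framework. Given a smooth test function $f$ on $\cUG_{m,n}$, the pullback $f\circ \rho$ has Sobolev norm controlled polynomially by that of $f$ since $\rho$ is a fixed polynomial map of bounded degree; applying effective equidistribution to $f\circ \rho$ and smoothing the sharp cut-off $\mathbf{1}_{H(\Lambda) \le B}$ at an optimal scale then yields the stated exponent. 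The main obstacle is precisely this quantitative bookkeeping: balancing the Sobolev degree needed to approximate $\mathbf{1}_{H(\Lambda)\le B}$ against the spectral rate and the pullback loss along $\rho$, together with a separate truncation of the contribution from lopsided lattices near the boundary of $\cUG_{m,n}$ that escape in $\cUL_{m(n-m),n^2}$. The exponent $1/(16n^4)$ appears conservative, reflecting a spectral gap of order $1/n$, a Sobolev degree of order $n^2$, and a polynomial loss of degree $O(n)$ in the pullback along $\rho$.
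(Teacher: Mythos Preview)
Your equivariance claim is false, and this breaks the approach at the first step. The identity $(\gamma\Lambda_0)^\perp = (\gamma^{\transpose})^{-1}\Lambda_0^\perp$ is correct, but $(\gamma\Lambda_0)^* = (\gamma^{\transpose})^{-1}\Lambda_0^*$ is not: the dual lattice $\Lambda^*$ lives in $\Lambda_\RR$, so $(\gamma\Lambda_0)^*\subset \gamma\Lambda_{0,\RR}$, whereas $(\gamma^{\transpose})^{-1}\Lambda_0^*\subset (\gamma^{\transpose})^{-1}\Lambda_{0,\RR}$, and these subspaces differ unless $\gamma$ is orthogonal. Concretely, for $n=2$, $m=1$, $\Lambda_0=\ZZ e_1$ and $\gamma=\left[\begin{smallmatrix}1&1\\0&1\end{smallmatrix}\right]$, one has $\gamma\Lambda_0=\ZZ e_1$ and hence $(\gamma\Lambda_0)^*=\ZZ e_1$, while $(\gamma^{\transpose})^{-1}\Lambda_0^*=\ZZ(e_1-e_2)$. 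The same example shows $(\gamma\Lambda_0)^\pi=\ZZ e_2\ne \gamma\Lambda_0^\pi=\ZZ(e_1+e_2)$, so the asserted identity $T_{\gamma\Lambda_0}=\rho(\gamma)T_{\Lambda_0}$ fails as well. The underlying reason is that both the dual and the factor lattice are defined using the Euclidean inner product, which is not $\SL_n(\RR)$-invariant; consequently $\Lambda\mapsto T_\Lambda$ is not equivariant for any linear representation of $\SL_n(\RR)$, and $\nu$ cannot be characterised as a $\rho(\SL_n(\RR))$-invariant measure.

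The paper circumvents this by not seeking equivariance at all. It introduces the space $\cP_{m,n}$ of pairs $(\Lambda,\factorg{\Lambda}{\Gamma})$, which \emph{is} a genuine $\SL_n(\RR)$-homogeneous space, and applies the effective counting theorem of Horesh--Karasik for primitive pairs in $\cP_{m,n}$ (this is their Theorem~3.2(4), not~3.2(3)). The passage from $\cP_{m,n}$ to $\cG_{m,n}$ is then via an explicit diffeomorphism $\phi\colon(\Lambda,\factorg{\Lambda}{\Gamma})\mapsto \Lambda^*\otimes_\ZZ\factorg{\Lambda}{\Gamma}$, and $\nu$ is \emph{defined} as the pushforward of the invariant probability on $\cUP_{m,n}$. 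Because the counting input already comes with a quantitative error term, the exponent $1/(16n^4)$ is obtained by a two-line arithmetic estimate on the Horesh--Karasik rate $\delta_E$, with no Sobolev smoothing or spectral-gap bookkeeping required.
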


The explicit  equidistribution statement is given below in 
Theorem~\ref{t:equi'}, with a variant phrased in terms of probability measures in Theorem~\ref{t:equi''}. The proof takes place 
in Section~\ref{s:equidistribution} and 
involves a reduction to an equidistribution statement about
pairs of lattices $(\Lambda,\Lambda^\pi)$,
for primitive  $\Lambda \in \cL_{m,n}$, 
that appears in recent work of
Horesh and Karasik~\cite[Thm.~3.2~(4)]{HK1}.

\begin{ack}
The authors are very grateful to Pierre Le Boudec and Will Sawin for useful comments, 
as well as to the anonymous referee for several helpful remarks. 
While working on this paper the first two authors were
supported by EPSRC 
grant \texttt{EP/P026710/1}, and the first and last authors by 
FWF grant P~32428-N35.  
\end{ack}

\section{Basic facts about lattices}\label{s:gn}

Recall that a lattice $\Lambda$ is a discrete subgroup of $\RR^N$, or more generally, of an $N$-dimensional real vector space equipped with an inner product. 
There exists an integer $r\leq N$ and linearly independent vectors 
$\mathbf{b}_1,\dots,\mathbf{b}_r\in \RR^N$ such that 
$\Lambda=\Span_\ZZ\{\mathbf{b}_1,\dots \mathbf{b}_r\}$. We then say that 
the \emph{rank} of $\Lambda$ is  $\rank(\Lambda)=r$ and call $\Lambda$ a \emph{full} lattice if $r=N$. 
The \emph{covolume} is $\covol(\Lambda)=\sqrt{\det(B^{\transpose} B)}=\sqrt{\det(\langle \bb_i, \bb_j \rangle)_{i,j}}$, 
where the \emph{basis matrix}
$B$ is the $N \times r$ matrix formed from the column vectors $\mathbf{b}_1,\dots,\mathbf{b}_r$. 
This definition is independent of the choice of basis.
We say that a lattice $\Lambda$ is \emph{unimodular} if $\covol(\Lambda)=1$. It will be convenient to denote 
by $\Lambda_\RR$  
the subspace $\Span_\RR(\Lambda)$  
 that a lattice $\Lambda$ generates.

A sublattice $\Lambda$ of a lattice $\Gamma$ is said to be \emph{primitive in $\Gamma$}
if there is no other sublattice $\Lambda'\subset \Gamma$ of the same rank which 
properly contains $\Lambda$. 
A lattice $\Lambda\subset\RR^N$ is said to be \emph{integral} if it is contained
in $\ZZ^N$. 
We say that an integral lattice is primitive if it is primitive in $\ZZ^N$.
Note that an integral lattice 
$\Lambda$ is primitive if and only if 
$\Lambda_\RR \cap \ZZ^N = \Lambda$.

In this section we collect together some facts about lattices, 
most of which are taken from the book by Cassels~\cite{cassels}. 
A special role in our work will be played by the dual lattice and the factor 
lattice, and so we begin by defining these.

\subsection*{Dual, orthogonal, and factor lattice}
Let $\Lambda\subset \RR^N$ be a lattice of rank $r$ with basis matrix $B$. 
The \emph{dual lattice} is defined to be 
\[
    \Lambda^*=
    \{\x\in \Lambda_\RR 
        : \langle \x, \y\rangle \in \ZZ \text{ for all $\y\in \Lambda$}\}.
\]
This lattice has basis matrix $B(B^{\transpose}B)^{-1}$.
It immediately follows that $\rank(\Lambda^*)=r$ and
$\covol(\Lambda^*)=\covol(\Lambda)^{-1}$.
Moreover, we have $(\Lambda^{*})^*=\Lambda$.

Now suppose that  $\Lambda\subset \ZZ^N$ is a primitive lattice.
The \emph{orthogonal lattice} $\Lambda^{\perp}$ is the primitive lattice
\begin{equation*}
    \Lambda^{\perp}
    = \left\{ \mathbf{a} \in \mathbb{Z}^N
        :  \langle \mathbf{a}, \mathbf{z} \rangle = 0
        \text{ for all $\mathbf{z} \in \Lambda$}
    \right\}.
\end{equation*}
We have $(\Lambda^\perp)^\perp=\Lambda$ and 
$\covol(\Lambda)=\covol(\Lambda^\perp)$.

If $\pi\colon \RR^N \to \Lambda_\RR^\perp$ is the orthogonal 
projection, then we define the \emph{factor lattice}
$\Lambda^\pi$ to be the projection $\pi(\ZZ^N)$.
We have $\Lambda^\pi=(\Lambda^\perp)^*$ and so
\[
    \covol(\Lambda^\pi)= \frac{1}{\covol(\Lambda)}.
\]

\subsection*{Successive minima and slopes}

Let  $\Lambda \subset \RR^N$ be a lattice of rank $r$.
For each $1\leq k\leq r$, let $s_k(\Lambda)$ be the least $\sigma>0$ such that 
$\Lambda$ contains at least $k$ linearly independent vectors of Euclidean length
bounded by $\sigma$. 
The $s_k(\Lambda)$ are the \emph{successive minima} of $\Lambda$, and they 
satisfy 
\[
    0<s_1(\Lambda)\leq s_2(\Lambda)\leq \cdots \leq s_r(\Lambda).
\]
(Note that when we speak of successive minima, we shall always mean with respect to the Euclidean norm $\norm{\cdot}$.)
It follows from Minkowski's second convex body theorem~\cite[\S~VIII.1]{cassels}
that
\begin{equation}\label{eq:upper-lower}
    \covol(\Lambda) \leq \prod_{i=1}^r s_i (\Lambda)\ll_N \covol(\Lambda), 
\end{equation}
where the implied constant depends only on the dimension $N$ of the ambient vector space.
Appealing to work of 
Banaszczyk~\cite[Thm.~2.1]{ban}, we have 
\begin{equation}\label{eq:succ-minima-duality}
1\leq s_k(\Lambda) s_{r-k+1}(\Lambda^*) \leq r ,
\end{equation}
for $1\leq k\leq r$.

The \emph{slope} of a lattice $\Lambda\subset \RR^N$ of rank $r$ is defined to be
\[
\mu(\Lambda)=-\frac{1}{r}\log \covol(\Lambda).
\]
The 
\emph{maximal slope} $\mu_{\text{max}}(\Lambda)$  of $\Lambda$ is the 
maximum of the slopes of all non-zero sublattices of $\Lambda$. On the other hand, 
the \emph{minimal slope} $\mu_{\text{min}}(\Lambda)$  of $\Lambda$ is the 
minimum of the slopes of all quotients $\Lambda/\mathrm{M}$, as $\mathrm{M}\subset \Lambda$ runs over proper and primitive sublattices.
According to 
Bost and Chen~\cite[Eq.~(2)]{bost-chen},  the maximal and minimal slopes are related via the formula
\begin{equation}\label{eq:slope-duality}
\mu_{\text{max}}(\Lambda^*) = - \mu_{\text{min}}(\Lambda),
\end{equation}
where $\Lambda^*$ is the dual of $\Lambda$. 
Appealing to Theorems 1 and 3  of Borek~\cite{borek}, we can also deduce that 
\begin{equation}\label{eq:borek}
    \begin{aligned}
        0 & \leq \log s_{r}(\Lambda)+ \mu_{\text{min}}(\Lambda)\leq c_r \quad \text{and}\\
        0 & \leq \log s_{1}(\Lambda)+ \mu_{\text{max}}(\Lambda)\leq c_r,
    \end{aligned}
\end{equation}
for a certain explicit constant $c_r>0$ depending only on the rank of $\Lambda$.

\subsection*{Tensor products}

Suppose we are given two lattices $\Lambda_1$ and $\Lambda_2$ in $\RR^N$, with $\rank(\Lambda_i)=r_i$  for $i=1,2$.
We may form the tensor product lattice 
$\Lambda_1\otimes_\ZZ \Lambda_2$, which has rank $r_1r_2$. 
Recall that $\Lambda_1 =(\Lambda_1^{*})^*$. 
Then we see that 
$\Lambda_1\otimes_\ZZ \Lambda_2=
(\Lambda_1^{*})^*\otimes_\ZZ \Lambda_2$, which  is isomorphic to the space 
$\Hom_\ZZ(\Lambda_1^*,\Lambda_2)$ via the map which takes an elementary tensor 
$\phi\otimes v_2\in (\Lambda_1^{*})^*\otimes_\ZZ \Lambda_2$
to the linear map $\Lambda_1^*\to \Lambda_2$ defined by 
$(\phi\otimes v_2)(v_1)=\phi(v_1)v_2$, 
for $v_1\in \Lambda_1^*$ and $v_2\in \Lambda_2$.
The covolume of the tensor product lattice is 
\begin{equation}\label{eq:tensor-covolume}
\covol(\Lambda_1\otimes_\ZZ \Lambda_2)
= \covol(\Lambda_1)^{r_2}\covol(\Lambda_2)^{r_1}.
\end{equation}
Several authors have investigated the relationship between the maximal slopes of tensor product lattices and the maximal slopes of the factors.
Appealing to  Chen~\cite[Thm.~1.1]{chen}, for example, one finds that 
\[
    \mu_{\max}(\Lambda_1)+ \mu_{\max}(\Lambda_2)\leq 
    \mu_{\max}(\Lambda_1 \otimes_\ZZ \Lambda_2)\leq
    \mu_{\max}(\Lambda_1)+ \mu_{\max}(\Lambda_2)+r_1+r_2.
\]
Applying this to $(\Lambda_1 \otimes_\ZZ \Lambda_2)^*=
\Lambda_1^*\otimes_\ZZ \Lambda_2^*$, we deduce that
\[
    \mu_{\min}(\Lambda_1)+ \mu_{\min}(\Lambda_2)-r_1-r_2\leq 
    \mu_{\min}(\Lambda_1 \otimes_\ZZ \Lambda_2)\leq
    \mu_{\min}(\Lambda_1)+ \mu_{\min}(\Lambda_2).
\]
Once taken in conjunction with~\eqref{eq:borek}, this implies that 
\begin{equation}\label{eq:s-tensor}
    s_{r_1}(\Lambda_1)s_{r_2}(\Lambda_2)\ll 
    s_{r_1r_2}(\Lambda \otimes_\ZZ \Lambda_2)\ll
    s_{r_1}(\Lambda_1)s_{r_2}(\Lambda_2),
\end{equation}
where the implied constants depend only on $r_1$ and $r_2$.

\subsection*{Freeness}
Suppose for the moment that $V$ is a smooth Fano $\QQ$-variety  
of dimension $r$, extended to a smooth 
projective 
scheme $V_\ZZ$ over $\Spec(\ZZ)$.
Any rational point $x \in V(\mathbb Q)$ extends to a unique integral point
$x \in V_\ZZ(\mathbb Z)$  of this scheme.
The pullback $(\cT_{V_\ZZ})_x$ of its tangent bundle $\cT_{V_\ZZ}$
along ${x}$ is a 
free $\ZZ$-module of rank $r$ inside
$(\cT_V)_x \otimes \RR = (\cT_{V_\ZZ})_x\otimes \RR$.
Fixing a Riemannian metric on $V(\mathbb R)$ induces an inner product on
$(\cT_V)_x\otimes \RR$ and makes $(\cT_{V_\ZZ})_x$ a lattice.
The model $V_\ZZ$ induces norms on $(\cT_V)\otimes K_v$ at all finite places 
$K_v$, and we get an \emph{adelic metric} on
$\cT_V$, as in~\cite[Ex.~3.4]{peyre-freedom}. 
The adelic metric on the tangent bundle induces an adelic norm on the
anticanonical bundle $\omega_V^\vee = \bigwedge^r \cT_V$,
hence an anticanonical height.
The logarithmic  anticanonical height is defined
in~\cite[Déf.~3.11]{peyre-freedom}, and it satisfies
\begin{equation*}
    h(x) = - \log\covol((\cT_V)_x) = r\mu((\cT_V)_x),
\end{equation*}
by~\cite[Déf.~4.1, Rem.~4.2]{peyre-freedom}.
In~\cite[D\'ef.~4.11]{peyre-freedom},
Peyre defines the \emph{freeness}  of $x$ to be  
\begin{equation}\label{eq:mu-free}
    \ell(x) =
    \frac{
        \max\left\{\mu_{\min} ((\cT_V)_{x}), 0\right\}
    }{
        \mu((\cT_V)_{x})
    } =
    \frac{
        \max\left\{r \mu_{\min} ((\cT_V)_{x}), 0\right\}
    }{
        h(x)
    }.
\end{equation}
Since the minimal slope is bounded by the slope, we have
\[
    0\leq \ell(x)\leq  1.
\]

\section{Tangent lattices for  Grassmannians}

We now interpret the above  in the case  $V=\gr(m,n)$ of  Grassmannians.
A rational point $x\in V(\QQ)$ is the same thing as a primitive 
lattice $\Lambda\subset \ZZ^n$ rank $m$. 

Following~\cite[\S~3.2.3--3.2.4]{3264}, for example, we may now indicate the construction of  
the tangent bundle of $\Gr(m,n)$. 
Consider the
trivial bundle $\cO_{V}^{\oplus n}$, carrying the standard inner product.
It admits the subbundle $\cS \subset \cO_{V}^{\oplus n}$ whose fibre at a point
$\Lambda$ is $\Lambda_\RR\subset \RR^n$, and the quotient bundle 
$\cQ=\cO_{V}^{\oplus n}/\cS$.
Then 
\[
    \cT_{V}\cong \cHom(\cS,\cQ)\cong \cS^\vee \otimes \cQ.
\]
The inner product on $\cO_{V}^{\oplus n}$ induces inner products on these other
bundles (using the canonical isomorphism
$\Lambda^*_\RR\cong \Lambda_\RR$ induced by the scalar product).
Thus it also induces a Riemannian metric.
All of these constructions work over $\Spec \ZZ$, giving rise to a smooth 
projective 
integral model $\Gr_\ZZ(m,n)$ together with bundles
$\cS_\ZZ$ (with $(\cS_{\ZZ})_\Lambda=\Lambda\subset\ZZ^n$),
$\cQ_\ZZ$ (with $(\cQ_{\ZZ})_\Lambda=\ZZ^n/\Lambda)$,
and $\cT_{V_\ZZ}=\cHom(\cS_\ZZ,\cQ_\ZZ)$. Note that
$\ZZ^n/\Lambda$ is isometric to $\Lambda^\pi$ via the orthogonal projection.
For a point $x=\Lambda\in \Gr(m,n)(\QQ)$, we are interested in the
\emph{tangent lattice}
\[
    T_\Lambda = (\cT_{V_\ZZ})_{x}
    =\cHom(\cS_\ZZ,\cQ_\ZZ)_x
    = \Hom(\Lambda,\ZZ^n/\Lambda) 
    \cong \Lambda^*\otimes_\ZZ \Lambda^\pi
\]
inside
\[
    \Lambda_\RR \otimes \Lambda_\RR^\perp
    \cong \Lambda_\RR^* \otimes \RR^n/\Lambda_\RR
    \cong (\cT_{V})_x\otimes \RR,
\]
where all isomorphisms are isometries.
Note that the dual of the tangent lattice is $T_\Lambda^*\cong \Lambda \otimes_\ZZ \Lambda^\perp$.
In view of~\eqref{eq:tensor-covolume}, 
this lattice has  covolume
\begin{equation}\label{eq:detT}
    \begin{split}
    \covol(T_\Lambda)
    &= \covol(\Lambda^*)^{n-m}\covol(\Lambda^\pi)^{m}\\
    &= \covol(\Lambda)^{m-n}\cdot \frac{1}{\covol(\Lambda)^{m}}\\
    &= \covol(\Lambda)^{-n}.
    \end{split}
\end{equation}
The logarithmic anticanonical height of $x=\Lambda$ is thus
\[
    h(x)
    = -\log\covol(\Lambda^*\otimes \Lambda^\pi)
    = n\log\covol(\Lambda).
\] 
The corresponding exponential height is
$
    H(x)=\covol(\Lambda)^{n}.
$

It follows from~\eqref{eq:borek} that there is an explicit constant $c_n>0$
such that 
\[
    0\leq \log s_{m(n-m)}(T_\Lambda)+ \mu_{\min}(T_\Lambda)\leq c_n,
\]
where $s_{m(n-m)}(T_\Lambda)$ is the largest successive minimum of the
tangent lattice. 
Hence, the definition~\eqref{eq:mu-free} yields
\begin{equation}\label{eq:f-P}
    \begin{split}
    \ell(x)
&=
    \frac{
        \max\left\{(\frac{m(n-m)}{n}) \mu_{\min}(T_\Lambda), 0\right\}
    }{
        \log \covol(\Lambda)
    }  \\  
    &=
    \frac{
        \max\left\{-(\frac{m(n-m)}{n}) \log s_{m(n-m)}(T_\Lambda), 0\right\}
    }{
        \log \covol(\Lambda)
    }+ O\left(\frac{1}{
        \log \covol(\Lambda)
    }
\right),
\end{split}
\end{equation}
for any primitive rank $m$ lattice $\Lambda\subset \ZZ^n$ representing a point
$x\in \gr(m,n)(\QQ)$.

One notes that 
\[
    s_{m(n-m)}(T_\Lambda)
    \geq \covol(T_\Lambda)^{\frac{1}{m(n-m)}}
    = \covol(\Lambda)^{-\frac{n}{m(n-m)}},
\]
by~\eqref{eq:upper-lower} and~\eqref{eq:detT}.
This argument suggests that if $x$ is ``typical'', in the sense that 
the successive minima of  $T_\Lambda$ all have equal order of magnitude, then
$\ell(x)= 1 +o(1)$, as $H(x)\to \infty$.

\subsection*{Projective space}

We proceed by discussing the  quantities we've introduced  in the most familiar case
$m=1$, 
corresponding to 
projective space $\PP_\QQ^{n-1}$.
The lattice $T_\Lambda$ then admits a particularly concrete description, as follows.

\begin{lemma}\label{lem:m=1}
    Let $x\in \gr(1,n)(\QQ)=\PP_\QQ^{n-1}(\QQ)$ be identified with 
    the lattice $\Lambda=\ZZ\x$, for 
    a primitive vector  $\x\in \ZZ^n$.
    Then
    $T_\Lambda$ is isometric to $\norm{\x}^{-1}( \ZZ^n\cap \x^\perp)^*$.
\end{lemma}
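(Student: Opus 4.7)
The plan is to unwind the definitions of the dual, orthogonal, and factor lattices in the rank-one case, and then exploit the fact that tensoring with a rank-one lattice amounts to a rescaling.

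First, I would compute $\Lambda^*$. Since $\Lambda = \ZZ\x$ is a rank-one lattice with $\Lambda_\RR = \RR\x$, the dual-lattice condition $\langle \y, k\x\rangle \in \ZZ$ for all $k\in\ZZ$ forces $\Lambda^* = \ZZ\cdot \x/\norm{\x}^2$. In particular, $\Lambda^*$ is a rank-one lattice generated by a single vector of Euclidean length $\norm{\x}^{-1}$. Next, because $\x$ is primitive, $\Lambda^\perp = \ZZ^n\cap \x^\perp$, and hence by definition $\Lambda^\pi = (\Lambda^\perp)^* = (\ZZ^n\cap \x^\perp)^*$, viewed inside the hyperplane $\x^\perp$.

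Now I would identify the tensor product. By the construction recalled in the section on tensor products, $T_\Lambda = \Lambda^* \otimes_\ZZ \Lambda^\pi$ is a sublattice of $\Lambda_\RR \otimes \Lambda_\RR^\perp$ equipped with the tensor inner product, under which $\langle a\otimes b, a'\otimes b'\rangle = \langle a,a'\rangle\langle b,b'\rangle$. Since $\Lambda^*$ is freely generated by $\x/\norm{\x}^2$, every element of $T_\Lambda$ can be written uniquely as $(\x/\norm{\x}^2)\otimes v$ with $v\in \Lambda^\pi$. The map
\[
    \Phi\colon \Lambda^\pi \longrightarrow T_\Lambda,\qquad v\longmapsto \frac{\x}{\norm{\x}^2}\otimes v,
\]
is then a $\ZZ$-module isomorphism. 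A direct computation with the tensor inner product shows
\[
    \norm{\Phi(v)}^2 = \left\|\frac{\x}{\norm{\x}^2}\right\|^2 \norm{v}^2 = \frac{\norm{v}^2}{\norm{\x}^2},
\]
so $\Phi$ scales every norm by exactly $\norm{\x}^{-1}$. Therefore $T_\Lambda$ is isometric to $\norm{\x}^{-1}\Lambda^\pi = \norm{\x}^{-1}(\ZZ^n\cap \x^\perp)^*$, as claimed.

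There is no real obstacle here beyond being careful that the ambient inner product on $\Lambda_\RR \otimes \Lambda_\RR^\perp$ is genuinely the tensor product inner product induced from the standard one on $\RR^n$, which is exactly the construction described in Section~\ref{s:gn}. Once this is in place, the proof is a direct calculation on a single generator, and all identifications are isometries.
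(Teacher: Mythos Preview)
Your argument is correct and follows essentially the same route as the paper: compute $\Lambda^*=\ZZ\cdot\x/\norm{\x}^2$, identify $\Lambda^\pi=(\ZZ^n\cap\x^\perp)^*$, and observe that tensoring with the rank-one lattice $\Lambda^*$ is an isometry onto a $\norm{\x}^{-1}$-rescaling. The paper packages the first step as the identity $\Gamma_\x^\pi=\norm{\x}^{-2}\Gamma_\x^\perp$ with $\Gamma_\x=\ZZ^n\cap\x^\perp$ and writes the isometry as $\alpha\x\otimes\w\mapsto\alpha\norm{\x}\w$, but after unwinding the notation this is exactly your map $\Phi$ read in the other direction.
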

\begin{proof}
    We begin by noting that   $\covol(\Lambda)=\norm{\x}$. Let 
    us put $\Gamma_{\x}=\ZZ^{n}\cap\perpen \x$, which we note is
    a primitive lattice of rank $n-1$. We begin by showing that
    \begin{equation}\label{eq:G1}
        \factor{\Gamma_{\x}}=\norm \x^{-2}\perpen{\Gamma_{\x}}.
    \end{equation}
    Clearly, $\perpen{\Gamma_{\x}}=\ZZ \x$, which is a
    primitive lattice of rank one  contained in the  space $\RR \x$. We
    compute $\factor{\Gamma_{\x}}$ using the fact that
    $\factor{\Lambda}=\dual{\left(\perpen{\Lambda}\right)}$
    when $\Lambda$ is primitive. Recalling the definition of the dual lattice,
    we therefore obtain
    \begin{align*}
        \factor{\Gamma_{\x}}
        &=\dual{\left(\perpen{\Gamma_{\x}}\right)}\\\
        &=\left\{ \z \in \left(\perpen{\Gamma_{\x}}\right)_\RR :
            \left\langle \y,\z\right\rangle \in\ZZ
            \text{ for all $\y\in\perpen{\Gamma_{\x}}$}\right\} \\
        &=\left\{ t\mathbf{x} :
            t\in\RR \text{ such that }
            \left\langle t\mathbf{x}, m\mathbf{x} \right\rangle
            \in \ZZ \text{ for all $m\in\ZZ$} \right\}.
    \end{align*}
    But
    $\left\langle t\mathbf{x},m\mathbf{x}\right\rangle =
    tm \left\langle \mathbf{x},\mathbf{x}\right\rangle = tm\norm \x^2$.
    Moreover, $tm\norm \x^{2}\in\ZZ$ for every $m\in\ZZ$ if and only if
    $t\in \norm \x^{-2}\ZZ$. 
    We conclude that 
    \[
        \factor{\Gamma_{\x}}
        = \left\{ t\x:t\in\norm \x^{-2}\ZZ\right\}
        = \norm \x^{-2}\ZZ \x
        = \norm \x^{-2}\perpen{\Gamma_{\x}},
    \]
    as required for~\eqref{eq:G1}.

    Next we observe that 
    \[
        T_\Lambda = (\ZZ\x)^*\otimes_\ZZ (\ZZ\x)^\pi
        = \Gamma_\x^\pi \otimes_\ZZ\Gamma_\x^*.
    \]
    We wish to prove that this is isometric to 
    \[
        \norm \x^{-1}\dual{\Gamma_{\x}}.
    \]
    We know that
    $\factor{\Gamma_{\x}}
    \subset\left(\perpen{\Gamma_{\x}}\right)_\RR
    =\RR \x$
    and $\dual{\Gamma_{\x}}\subset \Gamma_{\x,\RR}=\perpen \x$,
    because the dual lattice always lives in the same space as the original lattice,
    and the factor lattice always lives in the orthogonal space. 
    Thus $T_{\Lambda}\subset\RR \x\otimes\perpen \x$.
    We claim that the map
    \begin{align*}
        \varphi\colon \RR \x\otimes\perpen \x & \to\perpen \x \\
        \al \x\otimes \w & \mapsto\al \norm \x \w
    \end{align*}
    is an isometry, where the inner product on the tensor product is the
    product of $\left\langle \cdot,\cdot\right\rangle $ in each of the
    components. 
    In the light of~\eqref{eq:G1}, 
    this will suffice to complete the proof of the lemma, since then
    $\varphi$ clearly maps the lattice
    $\factor{\Gamma_{\x}}\otimes_{\ZZ}\dual{\Gamma_{\x}}$
    to $\norm \x^{-1}\dual{\Gamma_{\x}}$. 
    To check the claim, it suffices to find an orthornormal basis of
    $\RR \x\otimes\perpen \x$ that is taken to an 
    orthonormal basis  of $\x^\perp$. Clearly, the orthonormal basis 
    \[
        \left\{ \norm \x^{-1}\x\otimes \w_{1},\dots,
        \norm \x^{-1}\x\otimes \w_{n-1}\right\} 
    \]
    suffices,
    where $\left\{ \w_{1},\dots,\w_{n-1}\right\} $ is an orthonormal basis
    for $\perpen \x$.
\end{proof}

We next consider what the definition~\eqref{eq:f-P} has to say when 
 $m=1$. Applying Lemma~\ref{lem:m=1}, we obtain $h(x)=n\log \norm{\x}$ and 
\[
\ell(x) =
\frac{n-1}{n} \cdot \max\left\{1-
\frac{ \log s_{n-1}(( \ZZ^n\cap \x^\perp)^*)}{\log \norm{\x} }, 0\right\}
+O\left(\frac{1}{h(x)}\right),
\]
if $x\in \PP_\QQ^{n-1}(\QQ)$ is represented by  the primitive vector $\x\in \ZZ^n$.
But
\[
    \log s_{n-1}(( \ZZ^n\cap \x^\perp)^*)=-\log s_{1}( \ZZ^n\cap \x^\perp)+O(1),
\]
by~\eqref{eq:succ-minima-duality}. Since $s_{1}(  \ZZ^n\cap \x^\perp)\geq 1$, this shows that  
\[
\ell(x)\geq \frac{n-1}{n}+o(1),
\]
as $h(x)\to \infty$, 
which essentially recovers~\eqref{eq:Pn-1}.

\subsection*{The general case}

Let $m\geq 1$ and let $\Lambda\in \gr(m,n)(\QQ)$. 
The largest successive minimum of the tangent lattice $T_\Lambda=\Lambda^*\otimes_\ZZ \Lambda^\pi$ is
closely related  to those of  $\Lambda^*$ and  $\Lambda^\pi$. Indeed, it follows from~\eqref{eq:s-tensor} that 
\[
    s_{m}(\Lambda^*)s_{n-m}(\Lambda^\pi)\ll 
    s_{m(n-m)}(T_\Lambda)\ll
    s_{m}(\Lambda^*)s_{n-m}(\Lambda^\pi),
\]
where the implied constants depend only on $n$.

Returning to~\eqref{eq:f-P},
there exists a constant $C>0$, which depends only on $n$, such that 
any given $x\in \gr(m,n)(\QQ)$ has  $\ell(x)<\ve+o(1)$ if and only if 
\begin{equation*}
s_{m(n-m)}(T_\Lambda)\geq C 
  \left(\covol(\Lambda)\right)^{-\frac{\ve n}{m(n-m)}}.
\end{equation*}
On redefining $C$, this is equivalent to 
\[
    s_{m}(\Lambda^*)s_{n-m}(\Lambda^\pi)
    \geq C  \left(\covol(\Lambda)\right)^{-\frac{\ve n}{m(n-m)}}.
\]
It now follows from~\eqref{eq:succ-minima-duality} that 
$\ell(x)<\ve+o(1)$ if and only if 
\begin{equation}\label{eq:small-s1}
    s_{1}(\Lambda^\perp)s_{1}(\Lambda)\leq C
    \left(\covol(\Lambda)\right)^{\frac{\ve n}{m(n-m)}},
\end{equation}
after a further modification to  $C$.

\begin{proof}[Proof of Theorem~\ref{t:unfree}]
    Let  $1<m<n-1$. In particular, it follows
    that $n\geq 4$.
    Our task is to show that there are infinitely many $x\in \gr(m,n)(\QQ)$
    such that $\ell(x)=0$.
    For this we work directly with the definition~\eqref{eq:f-P} of $\ell(x)$
    in terms of slopes. 
    Consider the two vectors $\uu=(q,1,0,\dots,0)\in \ZZ^n$ and
    $\mathbf{v}=(1,-q,0,\dots,0)\in \ZZ^n$, 
    for an arbitrary positive integer $q$.
    Let
    $\mathbf{e}_1,\dots,\mathbf{e}_n$ be the standard basis vectors of $\RR^n$.
    We take 
    \[
        \Lambda =
        \ZZ\uu \oplus \ZZ\mathbf{e}_3 \oplus \dots \oplus \ZZ \mathbf{e}_{m+1}.
    \]
    This is a primitive  lattice of rank $m$, with 
    $\covol(\Lambda)=q$ and $s_1(\Lambda)=1$.
    The orthogonal complement is 
    \[
        \Lambda^\perp =
        \ZZ\mathbf{v} \oplus \ZZ\mathbf{e}_{m+2} \oplus \dots
        \oplus \ZZ \mathbf{e}_{n}.
    \]
    This is a primitive lattice of rank $n-m$, with 
    $\covol(\Lambda^\perp)=q$ and $s_1(\Lambda^\perp)=1$.
    On appealing to~\eqref{eq:slope-duality}, it now follows that 
    \[
        \mu_{\min}(T_\Lambda) 
        = -\mu_{\max}(\Lambda\otimes_\ZZ \Lambda^\perp)
        \le \log \covol(\Span_\ZZ(\mathbf{e}_{m+1}\otimes \mathbf{e}_{n}))
        = 0,
    \]
    whence $\ell(x)=0$.
\end{proof}

Let $\Omega_\ve=\{\Lambda \in\gr(m,n)(\QQ) : \ell(x)\le \ve\}$ be the set of 
non-$\ve$-free points on $\gr(m,n)$. The remaining task for this section is to prove that this set is not thin,
as claimed in Theorem~\ref{t:unfree-not-thin}. 

\begin{lemma}
    If $\ve>0$ and $1<m<n-1$, then the image of $\Omega_\ve$ is dense in
    $\prod_{v} \gr(m,n)(\QQ_v)$.
\end{lemma}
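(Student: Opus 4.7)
The plan is to verify density in the usual way: given any finite set $S$ of places of $\QQ$ and any non-empty open neighbourhoods $U_v\subset\gr(m,n)(\QQ_v)$ for $v\in S$, exhibit an element of $\Omega_\ve$ lying in every $U_v$. The key observation to exploit is the criterion~\eqref{eq:small-s1}, which shows that $\ell(\Lambda)<\ve$ holds whenever $\Lambda$ and $\Lambda^\perp$ both contain a non-zero vector whose length is bounded independently of $\covol(\Lambda)$, and $\covol(\Lambda)$ is taken sufficiently large.

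First, I would use weak approximation on the rational variety $\gr(m,n)$ to produce a primitive lattice $\Lambda_0$ whose associated rational point lies in every $U_v$. Fixing any primitive $\mathbf w\in\Lambda_0$ and any primitive $\mathbf u\in\Lambda_0^\perp$, set $L=\QQ\mathbf w$, $H=\mathbf u^\perp$, and consider the closed subvariety
\[
W=\{M\in\gr(m,n): L\subset M_\RR\subset H\}\subset\gr(m,n).
\]
Sending $M$ to $M_\RR/L\subset H/L$ identifies $W$ with $\gr(m-1,n-2)$, which has dimension $(m-1)(n-m-1)\geq 1$ thanks to the assumption $1<m<n-1$, and which contains $\Lambda_0$.

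Next, I would invoke weak approximation on the rational variety $W$, combined with the fact that $W$ is positive-dimensional, to produce infinitely many $M\in W(\QQ)$ that are $v$-adically close to $\Lambda_0$ for every $v\in S$ and have arbitrarily large covolume. (Concretely, one may restrict to a rational curve through $\Lambda_0$ inside $W$, for which both weak approximation and the existence of points of arbitrarily large height in any $v$-adic neighbourhood are immediate.) For any such $M$, the inclusions $L\subset M_\RR$ and $M_\RR\subset H$ force $\mathbf w\in M$ and $\mathbf u\in M^\perp$, whence
\[
s_1(M)\leq\norm{\mathbf w}\quad\text{and}\quad s_1(M^\perp)\leq\norm{\mathbf u},
\]
uniformly in $\covol(M)$. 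Feeding these bounds into~\eqref{eq:small-s1} and choosing $\covol(M)$ large enough shows $\ell(M)<\ve$, so $M$ lies in $\Omega_\ve\cap\prod_{v\in S}U_v$, as required.

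The main point requiring care is the simultaneous approximation-plus-unbounded-height step for the auxiliary Grassmannian $W$: I need rational points of $W$ that sit inside the prescribed $v$-adic open neighbourhoods and simultaneously have covolume large enough to activate~\eqref{eq:small-s1}. This is a routine consequence of weak approximation on a positive-dimensional rational variety, but must be stated precisely enough that the resulting $M$ remains in each $U_v$ (exploiting that the inclusion $W(\QQ_v)\hookrightarrow\gr(m,n)(\QQ_v)$ is a topological embedding, so a sufficiently close $M$ to $\Lambda_0\in U_v$ still lies in $U_v$). Once this is set up, the bounds on $s_1(M)$ and $s_1(M^\perp)$ coming from the fixed vectors $\mathbf w$ and $\mathbf u$ immediately finish the argument via~\eqref{eq:small-s1}.
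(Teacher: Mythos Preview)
Your proposal is correct and follows essentially the same route as the paper: reduce to the sub-Grassmannian $W\cong\gr(m-1,n-2)$ determined by a fixed rational line in $\Lambda_0$ and a fixed rational hyperplane containing $\Lambda_0$, apply weak approximation there to obtain rational points in the prescribed $v$-adic neighbourhoods of arbitrarily large height, and then invoke~\eqref{eq:small-s1} using the fixed vectors to bound $s_1(M)$ and $s_1(M^\perp)$. The only cosmetic difference is that the paper phrases the large-height step via Northcott (finitely many points of bounded height) rather than via a rational curve, but the content is the same.
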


\begin{proof}
    Let $S$ be a finite set of  places, and let
    $U_v \subset \gr(m,n)(\QQ_{v})$ be open subsets for $v\in S$.
    We want to find a $\Lambda\in\Omega_\ve$ whose image lies in
    $\prod_{v\in S} U_v$.
    By weak approximation, there exists $\Lambda_0 \in \gr(m,n)(\QQ)$
    whose image lies in
    $\prod_{v\in S} U_v$.
    Pick any rational line $l \subset \Lambda_{0,\RR} = \Lambda_0 \otimes \RR$
    and any rational hyperplane
    $H\subset \RR^n$ with
    $\Lambda_0\subset H$. We have $l=\langle \uu\rangle$ and $H=\langle\vv\rangle^\perp$
    for primitive $\uu, \vv\in\ZZ^n$.
    Consider the subvariety
    \[
        X=\{\Lambda \in \Gr(m,n): l \subset \Lambda_\RR \subset H\}
        \subset \Gr(m,n),
    \]
    which is isomorphic to $\Gr(m-1,n-2)$. Our 
    assumption 
    on $m$ ensures that  $\gr(m-1,n-2)$ is smooth and of positive dimension. Thus the non-empty
    open subsets $U_v \cap X(\QQ_{v})$ contain infinitely many
    $\QQ_{v}$-points for all $v\in S$.
    An application of weak approximation on $\Gr(m-1,n-2)$ therefore shows
    that there are infinitely many 
    $\Lambda^\prime \in X(\QQ)$ whose images lie in $U_v \cap X(\QQ_{v})$ for all 
    $v\in S$. 
    
    Let $C>0$ be sufficiently small that~\eqref{eq:small-s1} implies that the 
    corresponding point is not $\ve$-free.
    Since there are only finitely many points in $\gr(m-1,n-2)(\QQ)$ of bounded 
    height, we can find $\Lambda^\prime\in X(\QQ)$ whose images are in $U_v$
    for all 
    $v\in S$, with
    \[
        H(\Lambda^\prime) >
        \left(C^{-1} \norm{\uu}\norm{\vv}\right)^{\frac{m(n-m)}{\ve}}.
    \]
    Since the corresponding lattice $\Lambda$ satisfies
    $s_1(\Lambda)\le \norm{\uu}$ and $s_1(\Lambda^\perp)\le \norm{\vv}$,
    we have $P^\prime\in \Omega_\ve$.
\end{proof}

\begin{proof}[Proof of Theorem~\ref{t:unfree-not-thin}]
Appealing to work of Serre~\cite[Corollary~3.5.4]{Ser08}, the image of $\Omega_\ve$ would be
    nowhere dense if it where thin.
\end{proof}

\begin{remark}
    We note that $\Omega_0$ is always a thin set, since it fails to be Zariski dense. Indeed, when 
    $1<m<n-1$, we find that $\Omega_0$
    is the union of the proper subvarieties $\{P\in \gr(m,n)(\QQ): \uu\in P\}$, 
    where $\uu$ runs over the finitely many 
    vectors in $\ZZ^n$ of norm $O(1)$.
    If, on the other hand, $m$ is $1$ or $n-1$, then $\Omega_\ve=\emptyset$
    for $\ve<\frac{n-1}{n}$ by~\eqref{eq:Pn-1}. 
\end{remark}

\section{Free rational points dominate}\label{s:free-pf}

In this section we give our first proof of Theorem~\ref{t:free}.
We must provide an upper bound for the quantity
\[
E_{\ve}(B)=\#\{x\in \gr(m,n)(\QQ): H(x)\leq B, ~\ell(x)<\ve\},
\]
with the aim being to show that  $E_\ve(B)=o(B)$
for any  $0\leq \ve<1$. 
Switching to the language of primitive lattices, 
we write $P(m,n)$ for the set of primitive lattices $\Lambda\subset \ZZ^n$ which have rank $m$.
It then  follows from~\eqref{eq:small-s1} that $E_{\ve}(B)$ is at most 
\begin{align*}
\#\left\{ \Lambda \in P(m,n): \covol(\Lambda)\leq B^{ \frac{1}{n} }, ~
s_{1}(\Lambda^\perp)s_{1}(\Lambda)\leq C
 \left(\covol(\Lambda)\right)^{\frac{\ve n}{m(n-m)}}
\right\},
\end{align*}
for a suitable constant $C>0$ depending only on $n$.
It is convenient to 
break the range for the covolume and the sizes of $s_1(\Lambda)$ and $s_1(\Lambda^\perp)$
into  dyadic intervals. Thus we put 
\begin{equation}\label{eq:B-R}
E_{\ve}(B)
\leq \sum_{\substack{R=2^j\\1\leq R\leq 2B^{1/n}}}
\sum_{\substack{S_1=2^{j_1}, ~S_2=2^{j_2}\\1\leq S_1S_2\leq 
CR^{\frac{\ve n}{m(n-m)}}}}
\widetilde E_{\ve}(R,S_1,S_2),
\end{equation}
where
\[
    \widetilde E_{\ve}(R,S_1,S_2)=
    \#\left\{
    \Lambda \in P(m,n): 
    \begin{array}{l}
    R/2<\covol(\Lambda)\leq R,~\\
    S_1/2<s_{1}(\Lambda)\leq S_1,~\\
   S_2/2<s_{1}(\Lambda^\perp)\leq S_2
       \end{array}
    \right\}.
\]
Let 
\[
P(m,n;R,S)= \left\{ \Lambda \in P(m,n): \covol(\Lambda)\leq R, ~
s_{1}(\Lambda)\leq S\right\}.
\]
Then, since 
the 
 covolume is preserved under taking the  orthogonal complement, we see that 
 \begin{equation}\label{eq:bound-E-eps}
  \widetilde E_{\ve}(R,S_1,S_2)\leq 
  \min\left\{
 \#P(m,n;R,S_1), ~  \#P(n-m,n;R,S_2)
  \right\}.
 \end{equation}
Our attention now shifts to estimating 
$\#P(m,n;R,S)$
for given $1\leq m<n$ and $R,S\geq 1$.

Let $r\geq 1$ be an integer and let 
$s_1,\dots,s_r\in \RR$ with 
\[
1\leq s_1\leq s_2\leq \cdots \leq s_r.
\]
We let $P_{r,n}(s_1,\dots,s_r)$ be the set of primitive  lattices $\Lambda \subset \mathbb{Z}^n$  
of rank $r$
whose $i$th successive minimum lies in the interval $[s_i,2s_i)$, for $1\leq i\leq r$. We shall need the following result. 

\begin{lemma}\label{lem:count}
We have
\[
\#P_{r,n}(s_1,\dots,s_r)\ll s_1^{n+r-1}s_2^{n+r-3}\cdots s_r^{n+1-r},
\]
where the implied constant only depends on $n$. 
\end{lemma}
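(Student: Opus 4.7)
The plan is, for each $\Lambda$ counted by $P_{r,n}(s_1,\ldots,s_r)$, to fix a reduced basis adapted to the successive minima, and then bound the number of such bases by selecting the vectors one at a time.

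The starting point is the standard fact from the geometry of numbers that every lattice $\Lambda$ of rank $r$ admits a basis $\mathbf{b}_1,\ldots,\mathbf{b}_r$ such that $\norm{\mathbf{b}_i}\ll s_i(\Lambda)$ for every $i$, with implied constants depending only on $r$, and such that the Gram--Schmidt coefficients $\mu_{ij}$ (defined by $\mathbf{b}_i = \mathbf{b}_i^* + \sum_{j<i} \mu_{ij}\mathbf{b}_j^*$) all satisfy $|\mu_{ij}|\leq 1/2$; a Korkine--Zolotarev, LLL, or Minkowski-reduced-plus-size-reduced basis will do. The number of such bases for any given lattice is bounded in terms of $r$, so up to an acceptable constant factor it is enough to count reduced tuples $(\mathbf{b}_1,\ldots,\mathbf{b}_r) \in (\ZZ^n)^r$ with $\norm{\mathbf{b}_i}\ll s_i$ for each $i$.

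The counting itself then proceeds inductively on $i$. The number of admissible first vectors $\mathbf{b}_1 \in \ZZ^n$ with $\norm{\mathbf{b}_1} \ll s_1$ is $O(s_1^n)$. Assuming $\mathbf{b}_1,\ldots,\mathbf{b}_{i-1}$ have been fixed, with Gram--Schmidt orthogonalisation $\mathbf{b}_1^*,\ldots,\mathbf{b}_{i-1}^*$, the size-reduction hypothesis forces $\mathbf{b}_i$ to lie in the convex body $P_{i-1} + B_i \subset \RR^n$, where
\[
    P_{i-1} = \left\{\sum_{j<i}\lambda_j\mathbf{b}_j^* : |\lambda_j|\leq 1/2\right\}
\]
is a parallelepiped in $\Span_\RR(\mathbf{b}_1,\ldots,\mathbf{b}_{i-1})$ and $B_i$ is a ball of radius $\ll s_i$ in the $(n-i+1)$-dimensional orthogonal complement of this span. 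Since $\norm{\mathbf{b}_j^*}\leq \norm{\mathbf{b}_j}\ll s_j$, the Lebesgue measure of this body is $\ll s_1\cdots s_{i-1}\cdot s_i^{n-i+1}$. As all the $s_j$ are $\geq 1$, the body has widths bounded below, so a standard lattice-point-counting estimate produces the same upper bound for the number of $\mathbf{b}_i \in \ZZ^n$ lying inside it.

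Multiplying these estimates over $i = 1,\ldots,r$ yields
\[
    \#P_{r,n}(s_1,\ldots,s_r) \ll \prod_{i=1}^r s_i^{n-i+1} \cdot \prod_{j=1}^{r-1} s_j^{r-j} = \prod_{k=1}^r s_k^{n+r-2k+1},
\]
which is precisely the claimed bound. The main difficulty I anticipate is making the reduction theory play well with the inductive count: one needs both $\norm{\mathbf{b}_i}\ll s_i$ and the size-reduction $|\mu_{ij}|\leq 1/2$ to hold simultaneously, and one must verify that the number of reduced bases per lattice is truly $O_r(1)$ rather than merely polynomial in the lattice data; an HKZ-style reduction is the cleanest way to arrange both conditions.
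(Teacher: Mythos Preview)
The paper does not actually prove this lemma: it simply cites Schmidt~\cite[Lemma~6]{schmidt} and Browning--Le~Boudec--Sawin~\cite[Lemma~3.18]{random}. Your proposal is therefore supplying a self-contained argument where the paper gives only a reference, and the argument you outline is essentially the standard one underlying those references---parametrise each $\Lambda$ by a reduced basis and count basis tuples one vector at a time.

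Two small points are worth tightening. First, in the inductive step you justify the lattice-point bound for $P_{i-1}+B_i$ by saying ``all the $s_j$ are $\ge 1$, the body has widths bounded below''. What you actually need is $\norm{\mathbf{b}_j^*}\gg_r 1$; this does hold, but it requires the lower bound $\norm{\mathbf{b}_j^*}\gg_r s_j(\Lambda)$, which follows from combining $\prod_j\norm{\mathbf{b}_j^*}=\covol(\Lambda)\gg_r\prod_j s_j(\Lambda)$ (Minkowski's second theorem) with the upper bounds $\norm{\mathbf{b}_j^*}\le\norm{\mathbf{b}_j}\ll s_j(\Lambda)$ you already have. Once this is in place, the bound $\#(K\cap\ZZ^n)\le\vol(K+[-\tfrac12,\tfrac12]^n)\ll\vol(K)$ goes through. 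Second, your remark that ``the number of such bases for any given lattice is bounded in terms of $r$'' is true but points the wrong way: for an \emph{upper} bound on $\#P_{r,n}(s_1,\dots,s_r)$ you only need that each lattice admits \emph{at least one} reduced basis, giving $\#\{\text{lattices}\}\le\#\{\text{reduced tuples}\}$.

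With these clarifications your argument is correct and yields exactly the stated exponent $n+r-2k+1$ for each $s_k$.
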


\begin{proof}
This is extracted from Lemma 6 of 
Schmidt~\cite{schmidt} 
in work of Browning, Le Boudec and Sawin~\cite[Lemma 3.18]{random}.
\end{proof}

The following general inequality will facilitate our application of 
Lemma~\ref{lem:count}.
Let $r\geq 1$ and let 
$\xi_1,\dots,\xi_r, \alpha_1,\dots,\alpha_r\geq 0$. 
Then 
\begin{equation}\label{eq:lin-prog}
\xi_1^{\alpha_1}\cdots \xi_r^{\alpha_r}
\leq \left(\xi_1 \cdots \xi_r\right)^{\frac{\alpha_1+\cdots +\alpha_r}{r}},
\end{equation}
provided that $1\leq \xi_1\leq \cdots\leq \xi_r$ and $\alpha_1\geq \cdots \geq \alpha_r$.
This is easily proved by induction on $r$. Indeed, on putting $\beta=\alpha_1+\cdots+\alpha_{r-1}$, the induction hypothesis yields
\begin{align*}
\xi_1^{\alpha_1}\cdots \xi_r^{\alpha_r}
& \leq \left(\xi_1 \cdots \xi_{r-1}\right)^{ \frac{\beta}{r-1} }
        \cdot \xi_r^{\alpha_r}\\
& \leq \left(\xi_1 \cdots \xi_r\right)^{ \frac{\beta+\alpha_r}{r} }
        \cdot \left(\xi_1 \cdots \xi_{r-1}\right)^{ \frac{\beta}{r(r-1)}-\frac{\alpha_r}{r} }
        \cdot \xi_r^{ \alpha_r-\frac{\beta+\alpha_r}{r} }.
\end{align*}
Noting that $\beta/(r-1)\geq \alpha_r$,  we may take  $\xi_1,\dots, \xi_{r-1}\leq \xi_r$ in the second factor and then immediately 
arrive at the right hand side of~\eqref{eq:lin-prog}.

We are interested in the case $r=m$. Breaking into dyadic intervals, we see that 
\[
\#P(m,n;R,S)
\leq \sum_{\substack{
1\leq s_1 \leq \cdots \leq s_m\ll R\\
s_1\dots s_m \ll R\\
s_1\leq S}} \#P_{m,n}(s_1,\dots,s_m),
\]
where $s_1,\dots,s_m$ run over powers of $2$, subject to the stated inequalities.  
But Lemma~\ref{lem:count} yields
\begin{align*}
\#P_{m,n}(s_1,\dots,s_m)
&\ll s_1^{n+m-1} s_2^{n+m-3}\cdots s_m^{n+1-m}.
\end{align*}
Hence it follows from~\eqref{eq:lin-prog} that
\[
s_2^{n+m-3} s_3^{n+m-5}\cdots s_m^{n+1-m}
\ll 
\left(s_2\cdots s_m\right)^{\frac{\Delta}{m-1}},
\]
where
\begin{align*}
\Delta&=\sum_{k=0}^{m-2} (n+1-m+2k)=(m-1)(n-1).
\end{align*}
Thus
\begin{align*}
    \#P(m,n;R,S)
    &\ll \sum_{\substack{
        1\leq s_1 \leq \cdots \leq s_m\ll R\\
        s_1\cdots s_m \ll R\\ s_1\leq S
    }}
    s_1^{n+m-1} \left(s_2\cdots s_m\right)^{n-1}\\
    &\ll R^{n-1} (\log R)^{m-2} \sum_{1\leq s_1\leq S} \frac{s_1^{n+m-1}}{s_1^{n-1}} \\
   &\ll S_1^m R^{n-1} (\log R)^{m-2}.
\end{align*}

Returning to~\eqref{eq:bound-E-eps}, we 
apply the inequality $\min\{\alpha,\beta\}\leq \alpha^{\frac{n-m}{n}}\beta^{\frac{m}{n}}$ for any $\alpha,\beta\in \RR_{\geq 0}$, in order to  deduce that 
\begin{align*}
  \widetilde E_{\ve}(R,S_1,S_2)
 & \ll
  \left(S_1^{m}R^{n-1}  (\log R)^{m-2}\right)^{\frac{n-m}{n}} 
  \cdot
  \left(S_2^{n-m}R^{n-1}  (\log R)^{n-m-2}\right)^{\frac{m}{n}} \\
      &\leq  (S_1S_2)^{\frac{m(n-m)}{n}}R^{n-1}  (\log R)^{2m-2}.
\end{align*}
But then, 
on returning to~\eqref{eq:B-R} and  summing over dyadic intervals for $R, S_1,S_2$, we obtain  
\begin{align*}
E_{\ve}(B)
&\ll  \sum_{\substack{R=2^j\\1\leq R\leq 2B^{1/n}}}
R^{n-1+\ve}  (\log R)^{2m-2}\\
&\ll  
B^{1-\frac{1-\ve}{n}}  (\log B)^{2m-2}.
\end{align*}
This gives an explicit power saving error term if and only if $\ve<1$.
This therefore completes the proof of Theorem~\ref{t:free}.

\section{Tangent lattices equidistribute}\label{s:equidistribution}

The goal of this 
 section is to prove Theorem~\ref{t:equi}, namely to establish equidistribution of the projections of the
tangent lattices $\dual{\Lambda}\otimes_{\ZZ}\factor{\Lambda}$ in $\cUG_{m,n}$
with respect to 
a certain probability measure $\nu$ which will be defined below.

\subsection*{Spaces of lattices}
We begin by describing the spaces $\cL_{m,n}$ and $\cG_{m,n}$ of lattices as quotients.
Recalling the space $\cL_{m,n}$ defined in~\eqref{eq:Lmn}, we note that it is isomorphic to 
\[
\cL_{m,n}\cong\SL_n(\RR)/\left(\left[\begin{smallmatrix}
\GL_m(\ZZ) & \Mat_{m,n-m}(\RR)\\
0_{n-m,m} & \GL_{n-m}(\RR)
\end{smallmatrix}\right]\cap\SL_n(\RR)\right).
\]
For our needs, however, it will be useful to view it as the non-homogeneous
quotient $\cL_{m,n}\cong\indMat{n\times m}(\RR)/\GL_m(\ZZ)$,
where $\indMat{n\times m}(\RR)$ is the open subset of $\Mat_{n\times m}(\RR)$
consisting of matrices of full rank.
 
With the latter point of view, the subspace $\cG_{m,n}$ can be described as a quotient
$Y/\GL_{n^2}(\ZZ) \subset \indMat{n^2\times m(n-m)}(\RR)/\GL_{n^2}(\ZZ)$, where 
\begin{equation*}
    Y=	\left\{A\otimes  B : 
    \begin{array}{l}
    A\in \Mat_{n\times m}(\RR), 
    ~B\in \Mat_{n\times (n-m)}(\RR)\\ 
    \Span_\RR(A)=\Span_\RR(B)^\perp
    \end{array}
    \right\},
\end{equation*}
and where we interpret $A\otimes  B$ as $(a_{i,j}B)_{i,j}$ for  $A=(a_{i,j})\in \Mat_{n\times m}(\RR)$.
(Note that the conditions in $Y$ imply that $A$ and $B$ must be of full rank.)

Next, we extend the notion of a factor lattice to lattices that are not necessarily integral: Recall that a rank $m$ lattice $\Lambda$ is 
\emph{primitive inside a full unimodular lattice} $\Gamma$ if there is no rank $m$ sublattice of $\Gamma$ that properly contains $\Lambda$.
(Note that when $m<n$, every lattice is primitive with respect to infinitely many full unimodular lattices in $\RR^{n}$.)
For each  full unimodular lattice $\Gamma$ in which $\Lambda$ is primitive, we define the \emph{factor lattice of $\Lambda$ with respect to $\Gamma$} as $\factorg{\Lambda}{\Gamma}=\pi(\Gamma)$, where $\pi$ is
the orthogonal projection from $\RR^{n}$ to $\perpen{(\Span_{\RR}(\Lambda))}$. We have 
\[
    \covol(\factorg{\Lambda}{\Gamma})=\frac{1}{\covol(\Lambda)}.
\] 
In particular, $\factorg{\Lambda}{\Gamma}$ is unimodular if and only if  $\Lambda$ is. 

Consider the space of pairs 
\[
    \cP_{m,n}
        =
    \left\{(\Lambda,\factorg{\Lambda}{\Gamma}) : \text{$\Lambda$ primitive of rank $m$ in 
           a full unimodular lattice $\Gamma\subset \RR^{n}$  }\right\}.
\]
We observe that
\[
    \cP_{m,n}
    \cong\SL_n(\RR)/\left\{ \left[\begin{smallmatrix}
        \GL_m(\ZZ) & \Mat_{m,n-m}(\RR)\\
        0_{n-m,m} & \GL_{n-m}(\ZZ)
    \end{smallmatrix}\right]
    \cap\SL_n(\RR)\right\}.
\]
To see this, note that the first $m$ columns of any matrix in a given coset span the same lattice $\Lambda$, and the projections of the 
last $n-m$ columns of such a matrix span
$\factorg{\Lambda}{\Gamma}$, where $\Gamma$ is the full lattice spanned by the
columns of the matrix. The space $\cP_{m,n}$ is also a $\PGL_n(\RR)$-homogeneous
space since 
\[
    \cP_{m,n}\cong\PGL_n(\RR)/\left\{ \left[\begin{smallmatrix}
        \GL_m(\ZZ) & \Mat_{m,n-m}(\RR)\\
        0_{n-m,m} & \GL_{n-m}(\ZZ)
    \end{smallmatrix}\right]\right\}.
\]

\subsection*{Subspaces of unimodular lattices}

For the sake of defining the measure $\nu$ appearing in Theorem~\ref{t:equi}
and of proving Proposition~\ref{prop:diffeo} below, we briefly discuss the
subsets of unimodular elements inside $\cL_{m,n}$, $\cG_{m,n}$, 
and $\cP_{m,n}$. Let $\cUL_{m,n}$ denote the subset of unimodular lattices in
$\cL_{m,n}$, and observe that $\cUL_{m,n}\cong\cL_{m,n}/D$, where 
\[
    D =
    \{\diag{\al^{-\frac{1}{m}}\idmat m,\al^{\frac{1}{n-m}}\idmat{n-m}} :
        \alpha\in\RR_{>0}
    \}
    \cong\RR_{>0}.
\]
This is a one-parameter subgroup of diagonal matrices in $\SL_n(\RR)$.
Moreover, $\cUL_{m,n}\cong\induniMat{n\times m}(\RR)/\GL_m(\ZZ)$,
where $\induniMat{n\times m}(\RR)$ is the set of matrices $M$
of full rank that satisfy $\det(M^{\transpose}M)=1$. 

Recall that $\cUG_{m,n} = \cG_{m,n} \cap \cUL_{m,n}$.
Defining
\[
    \uni Y =  \{ A\otimes  B\in Y : \det(A)^{n-m}\det(B)^m=1 \},
\]
we get
\[
    \cUG_{m,n} \cong \uni {Y}/\GL_{m(n-m)}(\ZZ).
\]

Let $\cUP_{m,n}$ be the subset of pairs in $\cP_{m,n}$
for which $\Lambda$ (and therefore also $\factorg{\Lambda}{\Gamma}$)
is unimodular. 

The space $\cL_{m,n}$ decomposes as
\[
    \cL_{m,n}\cong\cUL_{m,n}\times\RR_{>0}
\] 
via $\Lambda\mapsto(\covol(\Lambda)^{-1/m}\Lambda,\covol(\Lambda))$,
with inverse map $\alpha^{1/m}L\mapsfrom(L,\alpha)$. 

For $\cG_{m,n}$, we use the decomposition
\[
    \cG_{m,n} \cong \cUG_{m,n}\times\RR_{>0},
\]
given by 
\[
    \Lambda \mapsto
    \left(\covol(\Lambda)^{-\frac{1}{m(n-m)}}\Lambda,
    \covol(\Lambda)^{-1}
    \right),
\]
with the inverse map being given by
$\al^{-1/(m(n-m))} L \mapsfrom(L,\al)$.

Lastly, $\cP_{m,n}$ decomposes as
\[
    \cP_{m,n}\cong\cUP_{m,n}\times\RR_{>0}
\] 
via 
\[
    (\Lambda,\factorg{\Lambda}{\Gamma})
    \mapsto(\covol(\Lambda)^{ -\frac{1}{m} } \Lambda, 
            \covol(\Lambda)^{\frac{1}{n-m}}\factorg{\Lambda}{\Gamma},\covol(\Lambda)),
\]
the inverse map being given by 
\[
    (\al^{\frac{1}{m}} L,\alpha^{-\frac{1}{n-m}}\factorg{L}{\Gamma^\prime})
    \mapsfrom(L,\factorg{L}{\Gamma^\prime},\al).
\]
In all three cases, we denote by $u$ the projection to the unimodular component. In particular, $\cG_{m,n} \to \cUG_{m,n} \times \RR_{>0}$ maps $T_\Lambda\mapsto (u(T_\Lambda), H(\Lambda))$, for $\Lambda\in\Gr(m,n)$.

The decomposition of $\cP_{m,n}$ extends to a decomposition of
homogeneous measure spaces as follows.
Let $\vol_{\cP_{m,n}}$ be an $\SL_n$-invariant measure on 
$\cP_{m,n}$ (as described, for example, in~\cite[Thm.~2.51]{Fol15}). 
Then the  Iwasawa decomposition of the Haar measure on $\mathrm{SL}_n$ yields
\begin{align*}
    (\cP_{m,n},\vol_{\cP_{m,n}})
    \cong(\cUP_{m,n},\vol_{\cUP_{m,n}})\times(\RR_{>0},x^{n-1} \diff x),
\end{align*}
for a finite measure $\vol_{\cUP_{m,n}}$ on $\cUP_{m,n}$, and we normalise all measures so that this measure is a probability measure.

\subsection*{From pairs $(\Lambda, \factor{\Lambda})$ to tensors $\dual{\Lambda} \otimes_\ZZ \factor{\Lambda}$}

We now summarise our strategy for proving Theorem~\ref{t:equi}.
We construct a diffeomorphism $\uni{\phi}\colon \cUP_{m,n}\to \cUG_{m,n}$ that will enable us to apply an equidistribution
statement for the primitive lattices in $\cUP_{m,n}$ with
respect to the measure $\vol_{\cUP_{m,n}}$.
The measure in our equidistribution theorem will be the pushforward $\nu=\uni{\phi}_*\vol_{\cUP_{m,n}}$ of $\vol_{\cUP_{m,n}}$.

\begin{proposition}\label{prop:diffeo}
    The map
    \[
        \phi \colon \cP_{m,n}\to \cG_{m,n}
    \]
    sending $(\Lambda,\factorg{\Lambda}{\Gamma})\mapsto\dual{\Lambda}\otimes_\ZZ \factorg{\Lambda}{\Gamma}$
    is a diffeomorphism, as is its restriction 
    \[
        \uni{\phi}\colon \cUP_{m,n}\to \cUG_{m,n}
    \]
    to the unimodular elements. Moreover, we have $\phi=\uni{\phi}\times (\,\cdot\,)^n$.
\end{proposition}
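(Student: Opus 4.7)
The plan is to exhibit an explicit smooth inverse for $\phi$ and then verify compatibility with the $\RR_{>0}$-decompositions of $\cP_{m,n}$ and $\cG_{m,n}$.

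Represent a point of $\cP_{m,n}$ by a matrix $g=[g_1\mid g_2]\in\SL_n(\RR)$ whose first $m$ columns $g_1$ form a basis of $\Lambda$ and whose last $n-m$ columns $g_2$, after orthogonal projection $\pi$ onto $\Lambda_\RR^\perp$, form a basis of $\factorg{\Lambda}{\Gamma}$. Then $\dual\Lambda\otimes_\ZZ\factorg{\Lambda}{\Gamma}$ is represented by the Kronecker product $g_1(g_1^\transpose g_1)^{-1}\otimes\pi(g_2)$, which is manifestly smooth in $g$. A direct computation shows that the stabiliser action $g\mapsto g\left[\begin{smallmatrix}\gamma_1 & M\\ 0 & \gamma_2\end{smallmatrix}\right]$, with $\gamma_1\in\GL_m(\ZZ)$, $\gamma_2\in\GL_{n-m}(\ZZ)$ and $M\in\Mat_{m,n-m}(\RR)$, sends this Kronecker product to its right-multiplication by $\gamma_1^{-\transpose}\otimes\gamma_2\in\GL_{m(n-m)}(\ZZ)$, so $\phi$ descends to a well-defined smooth map into $\cG_{m,n}$.

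To invert $\phi$, start with $T\in\cG_{m,n}$. Under the identification $\RR^n\otimes\RR^n\cong\Mat_{n\times n}(\RR)$, the subspace $V=\Lambda_\RR$ is recovered as the span of the column spaces of the matrices realising elements of $T$, and one may then write $T=A\otimes_\ZZ B$ for lattices $A\subset V$ and $B\subset V^\perp$. Such a decomposition is unique up to the rescaling $(A,B)\sim(\mu A,\mu^{-1}B)$ with $\mu\in\RR_{>0}$, integer basis changes being absorbed in the $\GL_{m(n-m)}(\ZZ)$ quotient defining $\cG_{m,n}$. The constraint $\covol(\Lambda)\covol(\factorg{\Lambda}{\Gamma})=1$ built into $\cP_{m,n}$ then fixes $\mu$: setting $\dual\Lambda=\mu A$ and $\factorg{\Lambda}{\Gamma}=\mu^{-1}B$, the relations $\covol(\dual\Lambda)=\mu^m\covol(A)$ and $\covol(\factorg{\Lambda}{\Gamma})=\mu^{-(n-m)}\covol(B)$ force $\mu=(\covol(B)/\covol(A))^{1/n}$, a smooth function of $(A,B)$, yielding a smooth inverse.

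For the unimodular restriction, if $\Lambda$ and $\factorg{\Lambda}{\Gamma}$ are both unimodular then~\eqref{eq:tensor-covolume} gives $\covol(T_\Lambda)=\covol(\dual\Lambda)^{n-m}\covol(\factorg{\Lambda}{\Gamma})^m=1$, so $\phi(\cUP_{m,n})\subseteq\cUG_{m,n}$; conversely, combining $\covol(A)^{n-m}\covol(B)^m=1$ with the explicit formula for $\mu$ forces $\covol(\Lambda)=\covol(\factorg{\Lambda}{\Gamma})=1$, so $\uni\phi$ is a diffeomorphism onto $\cUG_{m,n}$. For the decomposition $\phi=\uni\phi\times(\cdot)^n$, the scalar coordinate is $\covol(\Lambda)$ on the $\cP_{m,n}$ side and $\covol(T_\Lambda)^{-1}=\covol(\Lambda)^n$ on the $\cG_{m,n}$ side by~\eqref{eq:detT}, yielding the $n$-th power map; the two unimodular components agree by a short calculation exploiting the identity $\tfrac{1}{m}+\tfrac{1}{n-m}=\tfrac{n}{m(n-m)}$. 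The main obstacle is the uniqueness of the tensor decomposition in the previous paragraph, which is intuitively clear but requires a careful coordinate bookkeeping to rule out spurious freedom beyond the scalar $\mu$ and integer change-of-basis.
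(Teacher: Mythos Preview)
Your approach is essentially the same as the paper's: construct the forward map via the Kronecker product $g_1(g_1^\transpose g_1)^{-1}\otimes\pi(g_2)$, check it is well defined on the quotient, and then build an inverse by recovering the tensor factors. The paper organises this as a diffeomorphism $\hat\phi\colon \PGL_n(\RR)/U\to Y$ with explicit inverse $A\otimes B\mapsto(\tilde A\mid B)$, followed by passage to the integral quotients, but the content is the same.

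The point you flag as ``the main obstacle'' is, however, the real substance of the proof, not mere bookkeeping. What must be shown is: if $A\otimes B$ and $A'\otimes B'$ in $Y$ differ by an element $\gamma\in\GL_{m(n-m)}(\ZZ)$, then $\gamma=\gamma_m\otimes\gamma_{n-m}$ with $\gamma_m\in\GL_m(\ZZ)$ and $\gamma_{n-m}\in\GL_{n-m}(\ZZ)$. The paper does this in two steps. First, any $g\in\GL_{m(n-m)}(\RR)$ with $(A\otimes B)g\in Y$ is an elementary tensor: since $g$ preserves the column span $\Span(A)\otimes\Span(B)$, one gets $A'=Ag_m$, $B'=Bg_{n-m}$ for some $g_m,g_{n-m}$, and full rank of $A\otimes B$ forces $g=g_m\otimes g_{n-m}$. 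Second, and this is the step that is genuinely more than bookkeeping, if $g=\gamma\in\GL_{m(n-m)}(\ZZ)$ then its factors are integral: using $\Mat_{m(n-m)}(\ZZ)=\Mat_m(\ZZ)\otimes\Mat_{n-m}(\ZZ)$ one arranges $\gamma_m,\gamma_{n-m}$ to have integer entries, and then $\det(\gamma_m)^{n-m}\det(\gamma_{n-m})^m=\det\gamma=\pm1$ forces both determinants to be $\pm1$. Your sentence ``integer basis changes being absorbed in the $\GL_{m(n-m)}(\ZZ)$ quotient'' reads as assuming this, when in fact it is the statement to be proved; without it you have not established injectivity of $\phi$. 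Once this is in place, your computation of $\mu$ and the verification of $\phi=\uni\phi\times(\cdot)^n$ are fine.
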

Let
\[
    U=\left\{ \left[\begin{smallmatrix}
    \idmat m & \Mat_{m,n-m}(\RR)\\
    0 & \idmat{n-m}
    \end{smallmatrix}\right]\right\},
\] 
so that
\[
    \cP_{m,n}=\PGL_n(\RR)/\left(U\rtimes\left[\begin{smallmatrix}
        \GL_m(\ZZ) & 0\\
        0 & \GL_{n-m}(\ZZ)
    \end{smallmatrix}\right]\right).
\]
Clearly, $\dual{\Lambda}\otimes_\ZZ\factor{\Lambda}$ is a lattice of rank
$\rank(\dual{\Lambda})\cdot\rank(\factor{\Lambda})=m(n-m)$ inside the space
$\RR^{n}\otimes\RR^{n}\cong\RR^{n^{2}}$. 
The diffeomorphism will
be constructed in three steps:
\begin{enumerate}[label=(\roman*)]
    \item\label{enum:step-tilde-phi} define a differentiable map $\tilde{\phi}\colon \GL_n(\RR)\to Y$;
    \item\label{enum:step-phi-hat} obtain a diffeomorphism $\hat \phi\colon \PGL_n(\RR)/U\to Y$; and
    \item\label{enum:step-phi} reduce $\hat \phi$ modulo the integral subgroups to $\phi\colon \cP_{m,n}\to \cG_{m,n}$.
\end{enumerate}

\subsubsection*{Step~\ref{enum:step-tilde-phi}}

Let $\tilde{\phi}\colon \GL_n(\RR)\to Y$ be the map which is defined as
follows. For $g\in\GL_n(\RR)$, let $A_{g}\in\indMat{n\times m}(\RR)$
be the matrix consisting of the first $m$ columns of $g$.
Let $\tilde{A}_{g}
=A_{g}(A_{g}^{\transpose}A_{g})^{-1}\in\indMat{n\times m}(\RR)$, 
and let $\tilde{B}_{g}\in\indMat{n\times (n-m)}(\RR)$ be the matrix obtained from
projecting the columns of $g$ to the orthogonal subspace of $\tilde{A}_{g}$
(or $A_{g}$, as they span the same space).  Then we define
\[
    \tilde{\phi}(g)=\tilde A_{g}\otimes \tilde B_{g},
\]
which clearly lies in $Y$.
We note that $\tilde{\phi}$ is differentiable since it is a composition of differentiable maps.
Moreover, if $g=(A|B)$ generates a lattice $\Gamma$ and $A$ generates a lattice $\Lambda$,
then $\tilde\phi(g)$ generates $\Lambda^* \otimes_\ZZ \factorg{\Lambda}{\Gamma}$. 

\subsubsection*{Step~\ref{enum:step-phi-hat}}
By the definition of $\tilde B_g$, the map $\tilde \phi$
is invariant under the unipotent subgroup $U$. On the other hand,
multiplying $g$ by a constant $\lm$ changes $\tilde A_{g}$ by $\lm^{-1}$
and $\tilde B_{g}$ by $\lm$, so $\tilde{\phi}$ is also invariant under
the center of $\GL_n$. Hence $\tilde{\phi}$ descends to the quotient
$X
=\PGL_n(\RR)/U$, and we denote this map by $\hat\phi\colon X\to Y$. 

\begin{lemma}
    $\hat\phi\colon X\to Y$ is a diffeomorphism. 
\end{lemma}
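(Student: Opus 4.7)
The plan is to verify in turn that $\hat\phi$ is smooth, bijective, and has a smooth inverse. A dimension count using the natural parametrisation of $Y$ (a choice of orthogonal decomposition $\RR^n=\Span_\RR(A)\oplus\Span_\RR(B)$, together with bases modulo the scaling $(A,B)\mapsto(\lambda A,\lambda^{-1}B)$) gives $\dim X=\dim Y=n^2-1-m(n-m)$. Smoothness of $\hat\phi$ is immediate from smoothness of $\tilde\phi$ together with the universal property of the quotient manifold.

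For bijectivity, given $A\otimes B\in Y$, I would exhibit the explicit preimage
\[
g=(A(A^\transpose A)^{-1}\mid B)\in\GL_n(\RR).
\]
Using the identity $A_g^\transpose A_g=(A^\transpose A)^{-1}$ and the orthogonality condition $B\in\Span_\RR(A)^\perp$ built into the definition of $Y$, a short calculation gives $\tilde A_g=A$ and $\tilde B_g=B$, which proves surjectivity. For injectivity, I would use the fact that a Kronecker product of full-column-rank matrices determines its factors up to the rescaling $(A,B)\mapsto(\lambda A,\lambda^{-1}B)$. Thus if $\hat\phi([g_1])=\hat\phi([g_2])$, then after multiplying $g_2$ by a suitable scalar (permitted in $\PGL_n(\RR)$) we may assume $\tilde A_{g_1}=\tilde A_{g_2}$ and $\tilde B_{g_1}=\tilde B_{g_2}$. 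The first equation forces $A_{g_1}=A_{g_2}$, since $A\mapsto A(A^\transpose A)^{-1}$ is an involution, and the second forces the last $n-m$ columns of $g_1$ and $g_2$ to coincide modulo $\Span_\RR(A_{g_1})$, which is exactly the statement that $g_1g_2^{-1}\in U$.

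For smoothness of the inverse, I would patch together local sections. On the open subset of $Y$ on which a prescribed entry of the $A$-factor can be made nonzero, normalising this entry to $1$ renders the decomposition $(A,B)$ unique and rational, hence smooth, in the entries of the tensor; the assignment $A\otimes B\mapsto[(A(A^\transpose A)^{-1}\mid B)]$ is then a smooth local inverse, and these local inverses agree on overlaps because the scalar ambiguity $(\lambda,\lambda^{-1})$ is absorbed exactly by the centre inside $\PGL_n(\RR)$. The main obstacle I anticipate is precisely controlling this scalar ambiguity globally on the quotient; an alternative finish, once smoothness and bijectivity have been established, is to verify by a direct computation that $d\hat\phi$ has trivial kernel at a single point and then invoke the inverse function theorem together with the dimension equality.
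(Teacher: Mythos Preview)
Your proposal is correct and follows essentially the same route as the paper: both hinge on the explicit inverse $A\otimes B \mapsto [(A(A^{\transpose}A)^{-1}\mid B)]$, with the paper packaging this as a two-sided inverse $\hat\psi$ and checking $\hat\psi\circ\hat\phi=\id_X$ and $\hat\phi\circ\hat\psi=\id_Y$ directly, while you split the bijectivity into surjectivity and injectivity. Your treatment of smoothness of the inverse (via local sections normalising one tensor entry) is in fact more explicit than the paper's one-line assertion that ``$\hat\psi$ is differentiable because $\psi$ is''; the only caveat is that your alternative finish via the inverse function theorem at a single point would need a homogeneity argument to propagate the local diffeomorphism to all of $X$, so the local-sections route is the cleaner one.
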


\begin{proof}
    For $C\in Y$, write $C=A\otimes B$ with $A\in \indMat{n\times m}(\RR)$ and $B\in\indMat{n\times (n-m)}(\RR)$. For $\tilde{A}=A(A^{\transpose}A)^{-1}$, let
    $\psi\colon Y\to\PGL_n(\RR)$ be the map $C\mapsto(\tilde{A}|B)$.
    Indeed, $(\tilde{A}|B)$ has full rank by the assumptions on $A$ and $B$, 
    and it is independent of the choice of $A$ and $B$ as any other representation 
    $C = \lambda^{-1}A\otimes \lambda B$, for some $\lambda \in \RR^\times$, leads to
    \[
        (\widetilde{\lambda^{-1}A} | \lambda B) = \lambda (\tilde A|B),
    \]
    which has the same class as $(\tilde A|B)$ in $\PGL_n(\RR)$. 

    We claim that the map $\hat\psi\colon Y\to X$ obtained by reducing $\psi$
    mod $U$ is the inverse of $\hat\phi$.
    Note that the matrix $\psi\circ\hat\phi(A|B)$
    differs from $(A|B)$ by adding multiples of the first columns to
    the last columns, i.e., by an element of $U$, so
    $\hat\psi\circ\hat\phi=\id_{X}$.
    Conversely, the tensor $\hat\phi\circ\hat\psi(A\otimes B)$ is again
    $A\otimes B$, since $\tilde{\tilde{A}}=A$, and $A$, $B$ are already
    orthogonal, so the orthogonalisation step in $\hat\phi$ changes
    nothing. Thus $\hat\phi\circ\hat\psi=\id_{Y}$. Finally, $\hat\phi$
    is a differentiable since $\tilde{\phi}$ is, and $\hat\psi$ is
    differentiable because $\psi$ is. 
\end{proof}

\subsubsection*{Step~\ref{enum:step-phi}}
We first note that $\hat \phi$ descends to the quotient. Indeed, for a matrix $(A|B)\in\GL_n(\RR)$
and $(\ga_{m},\ga_{n-m})\in\GL_m(\ZZ)\times\GL_{n-m}(\ZZ)$, we have
\[
    \hat \phi\colon \left(A|B\right)(\ga_{m},\ga_{n-m})
    \mapsto(A\ga_{m}\otimes B\ga_{n-m})=(A\otimes B)(\ga_m\otimes\ga_{n-m}),
\]
so $(A|B)(\GL_m(\ZZ)\times\GL_{n-m}(\ZZ))$ maps into $(A\otimes B)\GL_{m(n-m)}(\ZZ)$.
Also note that $\phi$ is surjective since $\hat\phi$ is. The crucial
part is then to show injectivity, which we do using two technical lemmas.

\begin{lemma}\label{lem:elementary-tensors}
    Let $A\otimes B\in Y$. If $g\in\GL_{m(n-m)}(\RR)$ is such that $(A\otimes B)g\in Y$,
    then $g$ is an elementary tensor. 
\end{lemma}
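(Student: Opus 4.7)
The plan is to show that whenever the product $(A\otimes B)g$ is itself an elementary tensor $A^\prime\otimes B^\prime$ in $Y$, the matrix $g$ must split accordingly as $\gamma_m\otimes\gamma_{n-m}$ for some $\gamma_m\in\GL_m(\RR)$ and $\gamma_{n-m}\in\GL_{n-m}(\RR)$.

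First, I would translate the problem into a statement about subspaces of $\RR^n\otimes\RR^n$, using the standard identification of $\RR^n\otimes\RR^n$ with the space of $n\times n$ matrices via $v\otimes w\mapsto vw^{\transpose}$. Under this identification, each column of the $n^2\times m(n-m)$ matrix $A\otimes B$ corresponds to a rank-one matrix $a_j b_k^{\transpose}$, where $a_j$ and $b_k$ are columns of $A$ and $B$ respectively; hence the column span of $A\otimes B$ is exactly the subspace $\Span_\RR(A)\otimes\Span_\RR(B)$ of $\RR^n\otimes\RR^n$. Since $g$ is invertible, the column spans of $A\otimes B$ and $(A\otimes B)g$ coincide, and the hypothesis $(A\otimes B)g=A^\prime\otimes B^\prime$ therefore gives
\[
    \Span_\RR(A)\otimes\Span_\RR(B)=\Span_\RR(A^\prime)\otimes\Span_\RR(B^\prime).
\]

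The heart of the argument is the linear-algebraic fact that a subspace of $\RR^n\otimes\RR^n$ of the form $V\otimes W$ uniquely determines the pair $(V,W)$. To recover $V$, take any $v\in V$ and any nonzero $w\in W$: the rank-one matrix $vw^{\transpose}$ lies in $V\otimes W=V^\prime\otimes W^\prime$, and the column span of every matrix in $V^\prime\otimes W^\prime$ is contained in $V^\prime$, so $\RR v\subset V^\prime$. Hence $V\subset V^\prime$, and by symmetry $V=V^\prime$; the analogous argument using row spans yields $W=W^\prime$. In our setting this gives $\Span_\RR(A)=\Span_\RR(A^\prime)$ and $\Span_\RR(B)=\Span_\RR(B^\prime)$, so $A^\prime=A\gamma_m$ and $B^\prime=B\gamma_{n-m}$ for unique matrices $\gamma_m\in\GL_m(\RR)$ and $\gamma_{n-m}\in\GL_{n-m}(\RR)$.

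To finish, the mixed-product rule for Kronecker products yields
\[
    (A\otimes B)g=A^\prime\otimes B^\prime=(A\gamma_m)\otimes(B\gamma_{n-m})=(A\otimes B)(\gamma_m\otimes\gamma_{n-m}).
\]
Since $A$ and $B$ have full column rank, so does $A\otimes B$, and we may cancel it on the left to deduce $g=\gamma_m\otimes\gamma_{n-m}$. I expect the only step requiring genuine care is the canonical recovery of $V$ and $W$ from the subspace $V\otimes W$; the rest of the argument is essentially formal. It is worth remarking that the orthogonality condition built into the definition of $Y$ plays no role in the proof, so the lemma is really a purely tensorial statement rather than one reflecting the specific geometry of $\cG_{m,n}$.
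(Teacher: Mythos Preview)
Your argument is correct and follows essentially the same route as the paper: identify the column spans, deduce $\Span_\RR(A)=\Span_\RR(A^\prime)$ and $\Span_\RR(B)=\Span_\RR(B^\prime)$, write $A^\prime=A\gamma_m$, $B^\prime=B\gamma_{n-m}$, and cancel the full-rank factor $A\otimes B$. You supply more justification for the recovery of $V$ and $W$ from $V\otimes W$ than the paper does, and your observation that the orthogonality condition in $Y$ is irrelevant is accurate.
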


\begin{proof}
    Write $(A\otimes B)g=A^{\prime}\otimes B^{\prime}\in Y$.
    Since $g\in\GL_{m(n-m)}(\RR)$, it follows that
    $\Span(A\otimes B)=\Span(A^{\prime}\otimes B^{\prime})$,
    which means that
    $\Span(A)\otimes\Span(B)=\Span(A^{\prime})\otimes\Span(B^{\prime})$.
    As a result, $\Span(A)=\Span(A^{\prime})$ and $\Span(B)=\Span(B^{\prime})$,
    and therefore there exist $g_{m}\in\GL_m(\RR)$ and $g_{n-m}\in\GL_{n-m}(\RR)$
    such that $A^{\prime}=Ag_{m}$ and $B^{\prime}=Bg_{n-m}$.
    Thus $(A\otimes B)(g-g_{m}\otimes g_{n-m})=0$,
    which implies that $g=g_{m}\otimes g_{n-m}$,
    since $A\otimes B$ has full rank.
\end{proof}

\begin{lemma}\label{lem:Stab(Y)}
    Let $A\otimes B\in Y$, and let $\ga\in\GL_{m(n-m)}(\ZZ)$
    such that $(A\otimes B)\ga\in Y$. Then $\ga=\ga_{m}\otimes\ga_{n-m}$
    for $\ga_{m}\in\GL_m(\ZZ)$ and $\ga_{n-m}\in\GL_{n-m}(\ZZ)$.
\end{lemma}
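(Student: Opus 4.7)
The plan is to start from the real-tensor factorisation produced by Lemma~\ref{lem:elementary-tensors} and exploit the one-parameter scaling freedom $(g_m,g_{n-m})\mapsto(\lambda g_m,\lambda^{-1}g_{n-m})$, $\lambda\in\RR^\times$, to upgrade it to an integer-tensor factorisation. The main obstacle is converting the integrality of $\gamma$ into the integrality of each individual factor, since \emph{a priori} the entries of $g_m$ and $g_{n-m}$ need not even be rational.

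First, I would apply Lemma~\ref{lem:elementary-tensors} to the matrix $\gamma\in\GL_{m(n-m)}(\ZZ)\subset\GL_{m(n-m)}(\RR)$ to obtain a factorisation $\gamma=g_m\otimes g_{n-m}$ with $g_m=(a_{ij})\in\GL_m(\RR)$ and $g_{n-m}=(b_{kl})\in\GL_{n-m}(\RR)$. The key observation is that every entry $a_{ij}b_{kl}$ of $\gamma$ lies in $\ZZ$. Fixing nonzero entries $a_{i_0j_0}$ and $b_{k_0l_0}$, each ratio $a_{ij}/a_{i_0j_0}$ equals the quotient of two integer entries of $\gamma$ and is therefore rational; the same holds for $b_{kl}/b_{k_0l_0}$. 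Hence $g_{n-m}$ is a real scalar multiple of a rational matrix, and by clearing denominators and dividing out a common factor I can choose $\lambda\in\RR^\times$ so that $\lambda^{-1}g_{n-m}$ is a primitive integer matrix. Replacing the factorisation accordingly, I may assume that $g_{n-m}\in\Mat_{n-m}(\ZZ)$ has coprime entries.

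The final step views $\gamma$ as an $m\times m$ array of $(n-m)\times(n-m)$ blocks, with the $(i,j)$-block equal to $a_{ij}g_{n-m}$. Since this block is integral and the entries of $g_{n-m}$ have gcd $1$, Bezout's identity forces $a_{ij}\in\ZZ$, so $g_m\in\Mat_m(\ZZ)$. The identity $\det(\gamma)=\det(g_m)^{n-m}\det(g_{n-m})^m=\pm 1$ then involves only integers, so both $\det(g_m)$ and $\det(g_{n-m})$ must equal $\pm 1$, yielding $g_m\in\GL_m(\ZZ)$ and $g_{n-m}\in\GL_{n-m}(\ZZ)$, as required.
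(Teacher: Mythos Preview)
Your proof is correct and follows the same overall route as the paper: apply Lemma~\ref{lem:elementary-tensors} to obtain a real tensor factorisation, upgrade it to an integer factorisation, and then use the determinant identity $\det(\gamma)=\det(g_m)^{n-m}\det(g_{n-m})^m\in\{\pm 1\}$ to force both factors into $\GL(\ZZ)$. The paper compresses your middle step into the single assertion that, since $\Mat_{m(n-m)\times m(n-m)}(\ZZ)=\Mat_{m\times m}(\ZZ)\otimes\Mat_{(n-m)\times(n-m)}(\ZZ)$, the real factors can be taken to be integer matrices; your rescaling-to-a-primitive-matrix and Bezout argument is exactly what makes that assertion rigorous.
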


\begin{proof}
    Let $A\otimes B$ and $\ga$ be as in the statement.
    Appealing to Lemma~\ref{lem:elementary-tensors}
    and using the fact that $\Mat_{m(n-m)\times m(n-m)}(\ZZ)=\Mat_{m\times m}(\ZZ)\otimes\Mat_{(n-m)\times(n-m)}(\ZZ)$,
    we have $\ga=\ga_{m}\otimes\ga_{n-m}$, with $\ga_{i}\in\GL_i(\RR)\cap\Mat_{i\times i}(\ZZ)$
    for $i\in \{m,n-m\}$.  Since 
    \[
        \det(\ga_{m})^{n-m}\det(\ga_{n-m})^{m}=\det(\ga_{m}\otimes\ga_{n-m})=\det(\ga)\in\{\pm 1\},
    \]
    it follows that  $\det(\ga_{m}), \det(\ga_{n-m})\in \{\pm 1\}$.
    We conclude that  $\ga_{m}\in\GL_m(\ZZ)$ and $\ga_{n-m}\in\GL_{n-m}(\ZZ)$.
\end{proof}

\begin{proof}[Proof of Proposition~\ref{prop:diffeo}]
    As already noted, $\phi$ is well-defined and surjective. It is also
    injective by Lemma~\ref{lem:Stab(Y)}, since if two elements in
    $Y$ are equivalent modulo $\GL_{m(n-m)}(\ZZ)$, then their pre-images
    are equivalent modulo $\GL_m(\ZZ)\times\GL_{n-m}(\ZZ)$. Thus $\phi$
    is also bijective, and therefore the inverse map of $\hat\phi$ descends
    to an inverse map of $\phi$. Since $\hat\phi$ and its inverse are
    differentiable, so are $\phi$ and its inverse. 

    Composing $\phi$ with the decompositions of $\cP_{m,n}$ and $\cG_{m,n}$
    results in
    \begin{equation*}
        \begin{tikzcd}[cramped, sep = small]
            \cUP_{m,n}\times\RR_{>0} \arrow[r] &
            \cP_{m,n} \arrow[r, "\phi"] &
            \cG_{m,n} \arrow[r] &
            \cUG_{m,n} \times \RR_{>0} \\
            (L,\factorg{L}{\Gamma},\al) \arrow[r, mapsto] &
            (\al^{\frac{1}{m}}L,\al^{-\frac{1}{n-m}}\factorg{L}{\Gamma}) \arrow[r, mapsto] & 
            \al^{-\frac{n}{m(n-m)}}\dual L \otimes_\ZZ \factorg{L}{\Gamma} \arrow[r, mapsto]&
            (\dual L\otimes_\ZZ\factorg{L}{\Gamma},\al^n),
        \end{tikzcd}
    \end{equation*}
    hence $\phi$ restricts to a diffeomorphism
    $\uni\phi \colon \cUP_{m,n} \to \cUG_{m,n}$ that satisfies
    $\phi = \uni\phi \times (\cdot)^n$, as required.
\end{proof}

\subsection*{The main equidistribution theorem}

The equidistribution statement in Theorem~\ref{t:equi} employs
counting in subsets of the spaces $\cP_{m,n}$ and $\cUP_{m,n}$
satisfying the following boundary condition.

\begin{definition}
    The (topological) boundary of a set inside a manifold $\mathcal{M}$
    is said to be \textit{controlled} if it consists of finitely many subsets of
    embedded $C^{1}$ submanifolds whose dimension is strictly smaller
    than $\dim\mathcal{M}$.
\end{definition}

Having proved that the space $\cP_{m,n}$ is diffeomorphic to
$\cG_{m,n}$ via $\phi$, we may apply the following counting result for pairs in
$\cP_{m,n}$, in order to deduce our main equidistribution result about tangent lattices.

\begin{theorem}[Horesh--Karasik~\cite{HK1}]\label{thm:counting-pairs}
    Let $m,n$ be integers such that $1\leq m\leq n-1$. Let $E \subseteq\cUP_{m,n}$ be a subset
    whose  boundary is  controlled. 
    For any $B\geq 1$, let $E_B = E \times (0,B]\subset\cP_{m,n}$ denote the
    subset of pairs $(\Lambda,\factorg{\Lambda}{\Gamma})$ that project to $E$ and for
    which $\covol(\Lambda)\leq B$. 
    Then
    \[
        \#\left\{(\Lambda,\factor{\Lambda})\in E_B :
                 \text{$\Lambda$ primitive}\right\}
        = c_{m,n} \vol_{\cUP_{m,n}}(E)
        B^{n} + O_{\ve,E}(B^{n(1-\dl_{E})+\ve}),
    \] 
    for any $\ve>0$, where $c_{m,n}$ was defined in~\eqref{eq:cmn}
    and
    \[
        \dl_{E} =\begin{cases}
            \frac{\left\lceil \frac{n-1}{2}\right\rceil}{4n^{2}}
            & \text{ if $E$ is bounded,}\\
            \frac{\left\lceil \frac{n-1}{2}\right\rceil}{4n^{2}\cdot[2(\max(m,n-m)-1)(n^2-1)+n^2]} 
            & \text{ if $E$ is not bounded.}
        \end{cases}
    \]
\end{theorem}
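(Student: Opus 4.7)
The plan is to recast the count as a lattice-point problem on $\cP_{m,n}\cong\SL_n(\RR)/\Gamma_P$, where $\Gamma_P$ is the block-arithmetic subgroup described in Section~\ref{s:equidistribution}, and then apply effective equidistribution of integer orbits on $\SL_n(\ZZ)\backslash\SL_n(\RR)$. Under this identification, a primitive pair $(\Lambda,\factor\Lambda)$ corresponds to an $\SL_n(\ZZ)$-orbit on $\SL_n(\RR)/\Gamma_P$, and $E_B$ is a one-parameter family of subsets whose volume, via the Iwasawa-type decomposition $\cP_{m,n}\cong\cUP_{m,n}\times\RR_{>0}$ with density $x^{n-1}\diff x$, grows like a constant multiple of $\vol_{\cUP_{m,n}}(E)B^n$.

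For the main term I would smooth the indicator of $E_B$, periodise it over $\Gamma_P$ into an Eisenstein-type series, and pair it against the constant function on $\SL_n(\ZZ)\backslash\SL_n(\RR)$. The resulting Haar-integral yields the leading asymptotic; the normalisation that converts the bare volume into $c_{m,n}$ from~\eqref{eq:cmn} can be pinned down, for instance, by specialising $E=\cUP_{m,n}$ and comparing with Schmidt's asymptotic~\eqref{eq:schmidt}. The controlled-boundary hypothesis on $E$ enters here to keep the error from smoothing under control.

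For the error term I would appeal to effective equidistribution of expanding horospherical translates on $\SL_n(\ZZ)\backslash\SL_n(\RR)$, whose polynomial decay rate is governed by the spectral gap of $\SL_n$. The numerator $\lceil(n-1)/2\rceil/(4n^2)$ in $\delta_E$ reflects the best explicit decay exponent available for matrix coefficients of non-tempered automorphic representations of $\SL_n$, applied to the Sobolev-normalised periodisation of a bump function supported near $E$.

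The main obstacle is the unbounded case: when $E$ reaches into the cusps of $\cUP_{m,n}$, the lattice $\Lambda$ becomes arbitrarily lopsided, and neither the smoothing nor the equidistribution step is uniform. The standard remedy is to truncate $E$ at a Siegel-type height parameter $T$, using reduction theory for the Levi $\GL_m\times\GL_{n-m}$; one estimates the tail by a direct geometry-of-numbers count in the spirit of Section~\ref{s:free-pf}, and applies equidistribution only on the truncated part. Optimising $T$ as a power of $B$ produces the additional denominator factor $2(\max(m,n-m)-1)(n^2-1)+n^2$ appearing in $\delta_E$, which encodes the combined horospherical depth of the two Levi factors together with the ambient $\SL_n$-dimension.
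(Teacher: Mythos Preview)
The paper does not prove this theorem at all: it is quoted as a result of Horesh--Karasik~\cite{HK1}, and the only justification given is the sentence immediately following the statement, namely that it ``follows on combining~\cite[Thm.~3.2(4)]{HK1} with the discussion in~\cite[Remark~3.3]{HK1}'' to identify the leading constant. There is no argument in the present paper to compare your proposal against.

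Your sketch is a reasonable outline of the kind of argument one expects to find in~\cite{HK1} (effective equidistribution of expanding translates on $\SL_n(\ZZ)\backslash\SL_n(\RR)$, with a cusp truncation when $E$ is unbounded), and the heuristic for the shape of $\delta_E$ is plausible. But as written it is a high-level plan rather than a proof: the precise smoothing, the Sobolev bookkeeping, the tail estimate in the cusp, and the optimisation leading to the exact denominator $2(\max(m,n-m)-1)(n^2-1)+n^2$ are all asserted rather than carried out. If the intent is to reproduce the cited result, you should either work through those details or, as the paper does, simply invoke~\cite{HK1}.
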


To be precise, this  result follows on combining~\cite[Thm.~3.2(4)]{HK1}
with the discussion in~\cite[Remark~3.3]{HK1} around
normalising the invariant measures to get probability measures, thus obtaining the leading constant.
We are now ready to prove the following result, which is a more precise version of 
Theorem~\ref{t:equi}. 

\begin{theorem}\label{t:equi'}
    Let $m,n$ be integers such that $1\leq m\leq n-1$. Let $E \subseteq\cUG_{m,n}$ be a subset
    whose  boundary is  controlled. 
Define
    $
        E_{B} = E \times (0,B] \subset \cG_{m,n},
    $
    for $B\geq1$. Then  
    \begin{align*}
        \frac{
            \#\{ \Lambda \in \Gr(m,n)(\QQ) : T_\Lambda \in E_{B}\}
        }{
            \#\{\Lambda \in \Gr(m,n)(\QQ) : H(\Lambda) \le B \}
        }
        & = \nu(E)
        + O_{E}(B^{-\frac{1}{16n^4}}).
    \end{align*}
\end{theorem}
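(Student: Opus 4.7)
My plan is to translate the tangent-lattice count into a count of primitive pairs $(\Lambda,\Lambda^\pi)\in\cP_{m,n}$ using the diffeomorphism $\phi$ of Proposition~\ref{prop:diffeo}, apply Theorem~\ref{thm:counting-pairs} to handle the numerator, invoke Schmidt's asymptotic~\eqref{eq:schmidt} for the denominator, and take the ratio.

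To carry this out, I first rewrite the event $T_\Lambda\in E_B$ in terms of the pair. Under the decomposition $\cG_{m,n}\cong\cUG_{m,n}\times\RR_{>0}$, each $T_\Lambda$ corresponds to $(u(T_\Lambda),H(\Lambda))$, since $\covol(T_\Lambda)^{-1}=\covol(\Lambda)^n=H(\Lambda)$ by~\eqref{eq:detT}. Proposition~\ref{prop:diffeo} gives $\phi=\uni{\phi}\times(\,\cdot\,)^n$, so $T_\Lambda\in E\times(0,B]$ is equivalent to $u(\Lambda,\Lambda^\pi)\in\uni{\phi}^{-1}(E)$ together with $\covol(\Lambda)\le B^{1/n}$. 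Since $\uni{\phi}$ is a diffeomorphism, the set $E' = \uni{\phi}^{-1}(E)\subseteq\cUP_{m,n}$ inherits a controlled boundary from $E$. Applying Theorem~\ref{thm:counting-pairs} to $E'$ with upper bound $B^{1/n}$, and using that $\nu(E)=\vol_{\cUP_{m,n}}(\uni{\phi}^{-1}(E))$ by the definition $\nu=\uni{\phi}_*\vol_{\cUP_{m,n}}$, I obtain
\[
    \#\{\Lambda\in\Gr(m,n)(\QQ):T_\Lambda\in E_B\}=c_{m,n}\,\nu(E)\,B+O(B^{1-\delta_{E'}+\ve/n})
\]
for every $\ve>0$. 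Schmidt's estimate~\eqref{eq:schmidt} gives $c_{m,n}B+O(B^{1-\eta})$ for the denominator, with $\eta\ge 1/n^2$. Dividing, the main terms combine to produce $\nu(E)$, while the error becomes $O(B^{-\delta_{E'}+\ve/n})+O(B^{-\eta})$.

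The main obstacle, beyond this essentially formal reduction, is verifying that $\delta_{E'}\ge 1/(16n^4)$, so as to match the announced rate. The worst case is the unbounded regime with $\max(m,n-m)=n-1$, where
\[
    \delta_{E'}=\frac{\lceil(n-1)/2\rceil}{4n^2[2(n-2)(n^2-1)+n^2]}.
\]
A direct calculation, splitting on the parity of $n$, reduces the desired bound to the elementary inequality $n^2(4\lceil(n-1)/2\rceil-1)\ge 2(n-2)(n^2-1)$, which holds for all $n\ge 2$. Choosing $\ve$ sufficiently small to absorb the $\ve/n$ loss into the implied constant then yields the advertised error term $O(B^{-1/(16n^4)})$, completing the plan.
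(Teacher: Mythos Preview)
Your proposal is correct and follows essentially the same route as the paper: translate via $\phi=\uni\phi\times(\cdot)^n$ from Proposition~\ref{prop:diffeo}, apply Theorem~\ref{thm:counting-pairs} to the numerator and Schmidt's asymptotic~\eqref{eq:schmidt} to the denominator, and verify the lower bound on $\delta_{E'}$. One small point: to absorb the $\ve/n$ loss you need the inequality $\delta_{E'}>1/(16n^4)$ to be \emph{strict}, which your elementary inequality in fact is (and which the paper obtains more directly via $2(n-2)(n^2-1)+n^2<2(n-1)n^2$); with that adjustment your argument is complete.
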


\begin{proof} 
    We claim that 
    \[
        \delta_{E}>\frac{1}{16n^4}
    \]
    in the statement of Theorem~\ref{thm:counting-pairs}. This is obvious when
    $E$ is bounded. Alternatively, when 
    $E$ is not bounded, we note that
    \[
        2(\max(m,n-m)-1)(n^2-1)+n^2\leq 2(n-2)(n^2-1)+n^2<2(n-1)n^2,
    \]
    for any $1\leq m\leq n-1$ and $n\geq 2$. But then it follows that 
    \[
        \delta_{E}>\frac{\frac{1}{2}(n-1)}{4n^2\cdot 2(n-1)n^2}=\frac{1}{16n^4},
    \]
    as claimed.

    The set $E_B$ consists of all the lattices $L\in \cG$ whose
    normalisation lies in $E$ and for which
    $\covol(L)^{-1}\leq B$.
    It contains precisely those $T_\Lambda$ for primitive
    $\Lambda$ that project to $E$ and have $H(\Lambda)\leq B$.
    Since $\phi$ defines a bijection between the pairs $(\Lambda,\factor{\Lambda})$,
    where $\Lambda$ is primitive, and the elements
    $T_\Lambda = \dual{\Lambda}\otimes_\ZZ\factor{\Lambda}$,
    where $\Lambda$ is primitive, we deduce that the number of tangent lattices in
    $E_B$ is in fact equal to the number of
    ``primitive'' pairs  in $\phi^{-1}(E_{B})$.
    The latter can be estimated by Theorem~\ref{thm:counting-pairs}
    since  Proposition~\ref{prop:diffeo} implies that
    $\phi^{-1}(E_{B})=(\uni{\phi})^{-1}(E)\times(0,B^{1/n}]$,
    where $\uni{\phi}$ is a diffeomorphism, so that in particular
    $(\uni{\phi})^{-1}(E)$ has controlled boundary.
    On taking a sufficiently small choice of $\ve$ in 
    Theorem~\ref{thm:counting-pairs}, an application of this result 
    now yields
    \begin{align*}
        \#\{\Lambda \in \Gr(m,n)(\QQ) : T_\Lambda \in E_{B}\}
        &=
        c_{m,n}  \vol_{\cUP_{m,n}}((\uni{\phi})^{-1}(E))  B
        + O_{E} \left( B^{1-\frac{1}{16n^4}} \right)\\ 
        &= c_{m,n} \nu(E)
        B
        + O_{E}\left(B^{1-\frac{1}{16n^4}}\right). 
    \end{align*}
    The statement of the theorem now follows with the asymptotic expression~\eqref{eq:schmidt}
    for the denominator.
\end{proof}

The following reformulation of Theorem~\ref{t:equi'} will be very useful in applications. Call a Borel set
$E \subset \cUG_{m,n}$ a \emph{continuity set} if $\nu(\partial E)=0$, and denote by
$C^b(\cUG_{m,n})$ the space of bounded, continuous functions $\cUG_{m,n} \to \RR$.
Let
\[
    N_{m,n}(B)=\#\{\Lambda \in \Gr(m,n)(\QQ): H(\Lambda)\le B\}  
\]
be the number of rational points on $\Gr(m,n)$ of height at most $B$.
\begin{theorem}\label{t:equi''}
    Let 
    \[
        \nu_B = \frac{1}{N_{m,n}(B)} \sum_{H(\Lambda) \le B} \delta_{u(T_\Lambda)}
    \]
    be the sequence of probability measures on 
    $\cUG_{m,n}$ counting tangent lattices of points of bounded height.
    Then $\nu_B$ converges weakly to $\nu$ as $B\to\infty$, in the sense that
    \begin{enumerate}[label=(\roman*)]
        \item\label{enum:equi-int} $\lim_{B\to\infty} \int f \diff \nu_B = \int f \diff \nu \quad$
              for all  $f \in C^b(\cUG_{m,n})$,
        \item\label{enum:equi-cont-sets} $\lim_{B\to\infty }\nu_B(E) = \nu(E)\quad$
              for all continuity sets $E \subset \cUG_{m,n}$, and
        \item\label{enum:equi-closed-sets} $\limsup_{B\to \infty} \nu_B(E) \le \nu(E)\quad$  
              for all closed $E \subset \cUG_{m,n}$.
    \end{enumerate}
    Moreover, 
    \begin{enumerate}[label=(\roman*), resume]
    \item\label{enum:equi-counting} let $f\colon \cUG_{m,n}\to \RR_{\ge 0}$ be a bounded, continuous function.
    Let
    \begin{equation}\label{eq:modified-count}
        N_{m,n}(f;B) = \#\{\Lambda \in \Gr(m,n)(\QQ) : H(\Lambda) \le f(u(T_\Lambda)) B\}.
    \end{equation}
    Then
    \[
        \frac{N_{m,n}(f;B)}{N_{m,n}(B)}
        \to \int f \diff \nu,
        \qquad B\to \infty .
    \]
    \end{enumerate}
    
\end{theorem}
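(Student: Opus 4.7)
The plan is to deduce all four assertions from Theorem~\ref{t:equi'} by purely measure-theoretic arguments. First observe that, by definition,
\[
    \nu_B(E) = \frac{\#\{\Lambda \in \Gr(m,n)(\QQ) : u(T_\Lambda) \in E,~ H(\Lambda)\le B\}}{N_{m,n}(B)}
\]
for every Borel $E\subset \cUG_{m,n}$. Since $u(T_\Lambda)\in E$ together with $H(\Lambda)\le B$ is exactly the condition $T_\Lambda\in E_B$ under the decomposition $\cG_{m,n}\cong \cUG_{m,n}\times \RR_{>0}$, Theorem~\ref{t:equi'} yields
\begin{equation}\label{eq:direct}
    \lim_{B\to\infty} \nu_B(E) = \nu(E)
\end{equation}
for every $E\subset \cUG_{m,n}$ whose boundary is controlled.

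To pass from~\eqref{eq:direct} to the full weak-convergence statement (i)--(iii), I would invoke the Portmanteau theorem. Since $\cUG_{m,n}$ is a smooth (Polish) manifold, every open set $U\subset\cUG_{m,n}$ can be written as an increasing union $U=\bigcup_k V_k$ of finite unions $V_k$ of open coordinate cubes with $\overline{V_k}\subset V_{k+1}\subset U$; each such $V_k$ has controlled boundary. By inner continuity of $\nu$ we have $\nu(V_k)\uparrow \nu(U)$, and by~\eqref{eq:direct}
\[
    \nu(V_k) = \lim_{B\to\infty} \nu_B(V_k) \le \liminf_{B\to\infty} \nu_B(U),
\]
so that $\nu(U)\le \liminf_{B\to\infty} \nu_B(U)$ for every open $U$. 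The Portmanteau theorem for Borel probability measures on a Polish space then supplies (i), (ii) and (iii) at once.

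For (iv), let $M=\sup f$ and fix $\epsilon>0$. Using the inner regularity of $\nu$ (to locate a compact set $K\subset \cUG_{m,n}$ with $\nu(\cUG_{m,n}\setminus K)<\epsilon$) together with the uniform continuity of $f$ on $K$, I would construct a finite partition $\cUG_{m,n}=V_1\sqcup\cdots\sqcup V_K$ such that each $V_i$ is a finite union of coordinate cubes (or the complement of the others), hence has controlled boundary, with $\mathrm{osc}_{V_i}(f)<\epsilon$ for $i<K$ and $\nu(V_K)<\epsilon$. Pick $c_i\in f(V_i)$. For $i<K$, the inequalities $c_i-\epsilon\le f\le c_i+\epsilon$ on $V_i$ yield
\[
    \#\{\Lambda : u(T_\Lambda)\in V_i,\, H(\Lambda)\le (c_i-\epsilon)B\}
    \le N_{m,n}(f;B)\big|_{V_i}
    \le \#\{\Lambda : u(T_\Lambda)\in V_i,\, H(\Lambda)\le (c_i+\epsilon)B\},
\]
and Theorem~\ref{t:equi'} applied to each $V_i$ with height bound $(c_i\pm\epsilon)B$ evaluates both sides as $c_{m,n}\nu(V_i)c_i B + O(\epsilon\nu(V_i)B) + O_{V_i}(B^{1-1/(16n^4)})$. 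The contribution of $V_K$ is crudely bounded by $c_{m,n}\nu(V_K) MB + o(B) = O(\epsilon B) + o(B)$ via the same theorem. Summing and dividing by $N_{m,n}(B)=c_{m,n}B(1+o(1))$ gives
\[
    \frac{N_{m,n}(f;B)}{N_{m,n}(B)}=\int f\,d\nu + O(\epsilon) + o(1),
\]
and letting $B\to\infty$ and then $\epsilon\to 0$ delivers (iv).

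The main obstacle I anticipate is the partition step in (iv): one needs a finite covering by sets with controlled boundary on which $f$ varies little, while simultaneously confining the $\nu$-mass of the remainder. This is where the smooth manifold structure of $\cUG_{m,n}$ is genuinely used, since it allows one to refine an arbitrary open cover into one by coordinate cubes (which have controlled boundary by definition). Once this geometric reduction is carried out, the rest is a routine approximation argument.
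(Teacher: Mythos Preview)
Your proposal is correct. For (i)--(iii) it is essentially the paper's argument: both reduce to Theorem~\ref{t:equi'} on boundary-controlled sets and then invoke Portmanteau, the only difference being that the paper checks (ii) on a $\pi$-system basis of coordinate balls while you approximate open sets from inside---two standard routes to the same conclusion.

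For (iv) the approaches genuinely diverge. The paper does not carry out a partition argument; instead it observes that for compactly supported continuous $f$ the claim follows from the argument of Peyre~\cite[Prop.~3.3(b)]{peyre-duke}, and then extends to arbitrary bounded continuous $f$ by applying Prokhorov's theorem (tightness of $\{\nu_B\}$, which follows from weak convergence on a Polish space) to cut off outside large compact sets. Your Riemann-sum style argument via a controlled-boundary partition is more elementary and entirely self-contained: it avoids both the external reference and Prokhorov, at the cost of having to manufacture the partition by hand and track the $\epsilon$-dependent error constants. The paper's route is shorter to write but relies on the reader supplying two nontrivial ingredients; yours makes everything explicit. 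One cosmetic point: you use $K$ for both the compact set and the number of partition pieces, and you should note that the ``cubes'' in the partition must be taken half-open (or otherwise made disjoint) while keeping their boundaries inside finitely many coordinate hyperplanes---this is routine but worth saying, since it is exactly the step you flag as the main obstacle.
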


\begin{proof}
    The statements~\ref{enum:equi-int},~\ref{enum:equi-cont-sets},
    and~\ref{enum:equi-closed-sets} are equivalent.  It suffices to
    check~\ref{enum:equi-cont-sets} on a basis of the topology closed under
    finite intersections. Since the intersection of two boundary-controlled open
    sets is boundary-controlled and open, and since all open balls along local
    charts are boundary controlled, the assertion follows.  

    The last statement can be proved analogously to~\cite[Prop.~3.3~(b)]{peyre-duke} for continuous functions with compact support.
    It follows for arbitrary bounded, continuous functions $f$ using Prokhorov's theorem, cutting off $f$ outside progressively large compact subsets of $\cUG_{m,n}$. 
\end{proof}

\section{Consequences of equidistribution}\label{ssec:equi-and-freeness}

\subsection*{Equidistribution and freeness}

As a first application of the equidistribution theorem, we reprove Theorem~\ref{t:free}.
\begin{proof}[Alternative proof of Theorem~\ref{t:free}]
    Recall that $E_\ve(B) = \# \Omega_\ve(B)$, where
    \[
        \Omega_\ve(B) = \{\Lambda \in \Gr(m,n)(\QQ) : \ell(\Lambda) \le \ve , 
                          H(\Lambda) \le B\}.
    \]  
    Let
    \[
        \kappa = \limsup_{B\to\infty}
        \frac{
            E_\ve(B)
        }{
            N_{m,n}(B)
        }.
    \]
    We want to prove that $\kappa=0$.
    Note that $\mu(\alpha L) = \mu(L) - \log\alpha$ for all lattices $L$.
    Applying this to quotient lattices yields $\mu_{\min}(u(L)) = \mu_{\min}(L) - \mu(L)$.
    Moreover, recall that the logarithmic anticanonical height verifies $h(\Lambda) = \log H(\Lambda) = -\log \covol(T_\Lambda)$, whence 
    \begin{equation}\label{eq:slope-height}
        \mu(T_\Lambda) = \frac{h(\Lambda)}{m(n-m)}.
    \end{equation}
    Thus  $\ell(\Lambda)\le \ve$ is equivalent to
    \[
        \mu_{\min}(u(T_\Lambda)) \le \frac{\ve-1}{m(n-m)} h(\Lambda).
    \]
    For $R>0$, set
    \begin{align*}
        \Omega^{(0)}(R)
        & = \{ \Lambda \in \Gr(m,n)(\QQ) : H(\Lambda) \le R\} \quad \text{and} \\
        \Omega^{(1)}_{\ve}(R,B)
        & = \left\{ \Lambda \in \Gr(m,n)(\QQ) :
                \mu_{\min}(u(T_\Lambda)) \le \frac{\ve-1}{m(n-m)} \log R,\ 
                H(\Lambda) \le B \right\}.
    \end{align*}
    We note that
    \[
        \Omega_\ve(B) \subset \Omega^{(0)}(R) \cup \Omega^{(1)}_{\ve}(R,B),
    \]
    for all $R>0$.
    Clearly, 
    \[
        \limsup_{B\to\infty} \frac{\# \Omega^{(0)}(R)}{N_{m,n}(B)} = 0.
    \]
    To bound the number of elements of the second set, let 
    \[
        Z_\ve(R) = \left\{L \in \cUG_{m,n} : 
                     \mu_{\min}(L) \le \frac{\ve-1}{m(n-m)} \log R \right\},  
    \]
    and note that
    \[
        \Omega^{(1)}_{\ve}(R,B) = \{\Lambda \in \Gr(m,n)(\QQ) : u(T_\Lambda) \in Z_\ve(R), 
                                    \ H(\Lambda) \le B\}.
    \]
    Since $\mu_{\min}$ is clearly continuous,
    $Z_\ve(R)$ is closed. It follows that
    \[
        \kappa \le \limsup_{B\to\infty} \frac{
            \# \Omega^{(1)}_{\ve}(R,B)
         }{
             N_{m,n}(B)
         }
         \le \nu(Z_\ve(R)),
    \]
    by Theorem~\ref{t:equi''}~\ref{enum:equi-closed-sets}.
    The sets $Z_\ve(R)$ form a decreasing sequence whose intersection is empty, hence
    $\nu(Z_\ve(R)) \to 0$ as $R\to\infty$, completing the proof.
\end{proof}

\subsection*{Counting by maximal slope}
In~\cite[\S~9]{peyre-freedom}, Peyre puts forward an alternative way of involving
the well-shapedness of tangent lattices in point counting. He suggests ordering rational points $x$
on a Fano variety $V$ with a smooth integral model $V_\ZZ$ by the maximal slope of
their tangent lattice, instead of by height, so as to count the quantity
\[
    N_V^{\mu_{\max}}(B) = \# \{ x \in V(\QQ) : \mu_{\max}(\cT_{V_\ZZ,x}) \le \log B \}.    
\]
Since the logarithmic anticanonical height $h(x)=\log H(x)$ verifies
\[
    h(x) = \dim(V) \mu(\cT_{V_\ZZ,x}) \le \dim(V) \mu_{\max}(\cT_{V_\ZZ,x}),
\]
bounding the maximal slope automatically also bounds the anticanonical height,
yielding a trivial upper bound
\[
    N_V^{\mu_{\max}}(B) \le N_V(B^{\dim V}),
\]
where $N_V(B) = \{x\in V(\QQ) : H(x)\le B\}$. For $\PP^{n-1}=\Gr(1,n)$, this results in the upper bound
\[
    N_{\PP^{n-1}}^{\mu_{\max}}(B) \le c_{1,n} B^{n-1}(1+o(1)),
\]
by~\eqref{eq:schmidt}. In~\cite[Rem.~7.8]{peyre-freedom}, Peyre provides the lower bound
\[
    N_{\PP^{n-1}}^{\mu_{\max}}(B) \gg_{\eta} B^{n-1-\eta}
\]
for any $\eta>0$, and expects that $N_{\PP^{n-1}}^{\mu_{\max}}(B) \sim c_{1,n}^\prime B^{n-1}$
for a suitable constant $c_{1,n}^\prime >0$. 
Theorem~\ref{t:max-slope-intro}
confirms this expectation for projective space and provides an
analogous asymptotic formula for all other Grassmannians. 

\begin{proof}[Proof of Theorem~\ref{t:max-slope-intro}]
    Let
    \[
        N^{\mu_{\max}}_{m,n}(B) = \# \{\Lambda \in \Gr(m,n) : \mu_{\max} (T_\Lambda) \le \log B\}.
    \]
    Recall that $\mu(\alpha L) = \mu(L) -\log\alpha$. Applying this
    to all sublattices of $T_\Lambda$, we see that
    $\mu_{\max}(T_\Lambda) = \mu_{\max}(u(T_\Lambda)) + \mu(T_\Lambda)$.
    Using this and~\eqref{eq:slope-height}, the condition
    $e^{\mu_{\max}(T_\Lambda)} \le B$ can be seen to be equivalent to
    $$H(\Lambda)^{1/(m(n-m))} \le e^{-\mu_{\max}(u(T_\Lambda))} B.$$
    Hence
    \[
        N^{\mu_{\max}}_{m,n}\left(B^{ \frac{1}{m(n-m)} }\right) =
        N_{m,n}(f;B),
    \]
    for $f = e^{-m(n-m) \mu_{\max}}$, in the notation of~\eqref{eq:modified-count}.
    Since $\mu_{\max}$ is obviously continuous, so is $f$, and since
    $\mu_{\max}(L)\ge 0$ for unimodular $L$, the function $0\le f\le 1$ is bounded.
    An application of Theorem~\ref{t:equi''}~\ref{enum:equi-counting} now yields
    \[
        N^{\mu_{\max}}_{m,n}(B) \sim c_{m,n}^\prime B^{m(n-m)},
    \]
    where
    \begin{equation}\label{eq:c'_mn}
        c_{m,n}^\prime
        =
        c_{m,n}
        \int_{L \in \cUG_{m,n}}
        e^{-m(n-m)\mu_{\max}(L)} \diff \nu.
    \end{equation}

    It remains to prove that $0 < c_{m,n}^\prime < c_{m,n}$.
    Note that the non-empty open subsets $U_1=\{\mu_{\max}(L) < 1\}$ and
    $U_2 = \{\mu_{\max}(L) > 1\}$ of $\cUG_{m,n}$ have positive measure.
    Now $f|_{U_1} > e^{-m(n-m)}$, hence $\int f \diff \nu > 0$. This implies that $c_{m,n}'>0$.
    From $f\le 1$, we can deduce the inequality $\int f \diff \nu \le 1$,
    and since $f|_{U_2} \le e^{-m(n-m)} < 1$, the inequality is strict, whence
    $c_{m,n}'<c_{m,n}$.
\end{proof}

\end{document}